\newtheorem{thm}{Theorem}[section]
\newtheorem{defn}[thm]{Definition}
\newtheorem{ex}[thm]{Example}
\newtheorem{lem}[thm]{Lemma}
\newtheorem{prop}[thm]{Proposition}
\newtheorem{cor}[thm]{Corollary}
\newtheorem*{que}{Question}
\numberwithin{equation}{section}
\title{String operations on rational Gorenstein spaces}
\author{Takahito Naito}
\date{}
\address{Department of Mathematical Sciences, Shinshu University, 3-1-1 Asahi, Matsumoto, Nagano 390-8621, Japan}
\email{naito@math.shinshu-u.ac.jp}
\keywords{string topology, Gorenstein space, rational homotopy theory}
\subjclass[2010]{Primary 55P35; Secondary 55P62}
\begin{document}
\maketitle

\begin{abstract}
F\'{e}lix and Thomas developed string topology of Chas and Sullivan on simply-connected Gorenstein spaces. In this paper, we prove that the degree shifted homology of the free loop space of a simply-connected ${\mathbb Q}$-Gorenstein space with rational coefficient is a non-unital and non-counital Frobenius algebra by solving the up to constant problem. We also investigate triviality or non-triviality of the loop product and coproduct of particular Gorenstein spaces.
\end{abstract}

\section{Introduction}
Chas and Sullivan \cite{CS} introduced a new algebraic structure which is an intersection type product (called the loop product) on the shifted singular homology ${\mathbb H}_{*}(LM)=H_{*+d}(LM) $ of the free loop space $LM={\rm Map}(S^{1},M)$ of any closed oriented $d$-manifold $M$.
Cohen and Godin \cite{CG2004} generalized the product which is called string operations and showed that $H_{*}(LM)$ has the structure of a noncounital commutative Frobenius algebra. It thus gives rise to a 2-dimensional topological quantum field theory without counit. The coproduct of $H_{*}(LM)$ is called the loop coproduct.\\
\indent
After the appearance of \cite{CS}, several authors have extended the theory of string topology.
For instance, Chataur and Menichi \cite{CM2008} considered string topology of classifying spaces. They also proved in the article that the singular cohomology of the free loop space of the classifying space of a compact connected Lie group or a finite group with field coefficients is a homological conformal field theory. In \cite{FT2009}, F\'{e}lix and Thomas developed string topology on Gorenstein spaces. A Gorenstein space was defined by F\'{e}lix, Halperin and Thomas in \cite{FHT1988} and, for instance, closed oriented manifolds, the classifying spaces of connected Lie groups and Borel constructions of Poincar\'{e} duality spaces are Gorenstein spaces. 
The following question is proposed by F\'{e}lix and Thomas; see \cite[p.423]{FT2009}.

\begin{que}
Do the string operations of the homology of the free loop space of a simply-connected Gorenstein space give rise to a 2-dimensional topological quantum field theory?
\end{que}

To answer the question, it is necessary to consider the ``up to constant problem''.
That is to say, there is a problem which involves the strict associative of the loop product and so on. 
The aim of this paper is to give one approach for solving the problem in rational coefficient case.\\
\indent
Let $Dlp$ and $Dlcop$ be the dual loop product and the dual loop coproduct; see \S 2 and \S 3 for the precise definitions. Denote by $(H^{*}(LM;{\mathbb Q}))^{\vee }\cong H_{*}(LM;{\mathbb Q})$ the dual vector space of $H^{*}(LM;{\mathbb Q})$ and by
\begin{align*}
Lp = (Dlp)^{\vee}  : H_{*}(LM;{\mathbb Q})\otimes H_{*}(LM;{\mathbb Q})\longrightarrow H_{*}(LM;{\mathbb Q}),\\
Lcop =(Dlcop)^{\vee}: H_{*}(LM;{\mathbb Q}) \longrightarrow H_{*}(LM;{\mathbb Q})\otimes H_{*}(LM;{\mathbb Q})
\end{align*}
the duals of $Dlp$ and $Dlcop$, respectively.
The following theorem is the main result in this article. Proposition \ref{dlp}, \ref{dlcp} and \ref{Fro} show the following identities (1), (2) and (3).
\begin{thm}\label{mthm} Let $M$ be a simply-connected ${\mathbb Q}$-Gorenstein space of formal dimension $d$. Then,
\begin{enumerate}
\item $Lp (Lp\otimes 1) = (-1)^{d} Lp (1\otimes Lp )$,
\item $(Lcop \otimes 1)Lcop = (-1)^{d}(1\otimes Lcop ) Lcop$,
\item $Lcop \circ Lp = (-1)^{d}(Lp\otimes 1)(1\otimes Lcop) = (-1)^{d}(1\otimes Lp)(Lcop \otimes 1)$.
\end{enumerate}
\end{thm}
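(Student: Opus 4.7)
My plan is to prove all three identities at the chain level using Sullivan models, by making explicit the construction of the dual loop (co)product $Dlp$, $Dlcop$ for Gorenstein spaces from \cite{FT2009} and keeping careful track of every scalar. Let $(\Lambda V,d)\xrightarrow{\simeq} A_{PL}(M)$ be a Sullivan minimal model. Since $M$ is a $\mathbb{Q}$-Gorenstein space of formal dimension $d$, the Ext group $\mathrm{Ext}^{d}_{C^{*}(M;\mathbb{Q})}(\mathbb{Q},C^{*}(M;\mathbb{Q}))$ is one-dimensional; any choice of generator yields a degree-$d$ ``shriek map'' $\Delta^{!}$ for the diagonal $\Delta\colon M\to M\times M$. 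By definition $Dlp$ is the composite on $H^{*}(LM;\mathbb{Q})$ arising from $\Delta^{!}$ through the pullback square defining $LM\times_{M}LM$, and $Dlcop$ is the analogous construction using the shriek of the loop concatenation $\gamma\colon LM\times_{M}LM\to LM$.

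To prove the associativity (1), I would factor both iterated composites through the triple pullback $LM\times_{M}LM\times_{M}LM$ obtained from
\[
\xymatrix{
LM\times_{M}LM\times_{M}LM\ar[r]\ar[d] & LM\times LM\times LM\ar[d]\\
M\ar[r]^-{\Delta_{3}} & M\times M\times M
}
\]
by writing the threefold diagonal $\Delta_{3}$ as either $(\Delta\times 1)\circ\Delta$ or $(1\times\Delta)\circ\Delta$. Functoriality of the shriek construction under base change identifies both iterated products with the single shriek of the outer square, and the two decompositions differ by commuting two degree-$d$ shriek maps, which produces the sign $(-1)^{d}$ by Koszul's rule; dualising yields (1). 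The coassociativity (2) then follows by the same strategy applied to the iterated loop coproduct, using the triple fiber product arising from $\gamma$ in place of $\Delta$.

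For the Frobenius compatibility (3), the strategy is to identify both $(Lp\otimes 1)(1\otimes Lcop)$ and $(1\otimes Lp)(Lcop\otimes 1)$ with the cohomology operation coming from a common ``pair-of-pants'' pullback square built from $\Delta$ and $\gamma$. Up to suitable base changes, both composites translate to shrieks around this single square, and hence equal $Lcop\circ Lp$ up to the Koszul sign $(-1)^{d}$ produced by commuting a degree-$d$ shriek past the shriek of the loop concatenation.

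The main obstacle is precisely the \emph{up-to-constant problem} highlighted in the introduction: each shriek map is determined only up to a nonzero scalar, so a priori the two sides of any of (1)--(3) could differ by an uncontrolled constant rather than just by $(-1)^{d}$. The heart of the argument is therefore to choose a single global normalization --- concretely, to fix once and for all a generator of $\mathrm{Ext}^{d}_{C^{*}(M;\mathbb{Q})}(\mathbb{Q},C^{*}(M;\mathbb{Q}))$ --- and to verify that every shriek appearing in the three proofs is induced from this one generator via the functoriality of shrieks along the relevant pullback squares. With such a coherent choice the scalar ambiguities cancel, and the only surviving contribution on each side is the Koszul sign $(-1)^{d}$ coming from commuting the degree-$d$ shriek maps.
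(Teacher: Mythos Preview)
Your overall strategy matches the paper's: reduce each side to a single operation on a triple fiber product via Eilenberg--Moore maps, and track one fixed $\Delta^{!}$ through the relevant base-change squares so that the only surviving discrepancy is the Koszul sign $(-1)^{d^{2}}=(-1)^{d}$ from interchanging two degree-$d$ maps. One clarification: the shriek for the concatenation $\gamma=\mathrm{Comp}$ used in $Dlcop$ is not an independent datum but is itself induced from the \emph{same} $\Delta^{!}$, via the pullback of $\Delta$ along $j\colon LM\to M\times M$, $j(\gamma)=(\gamma(0),\gamma(\tfrac12))$. You say as much later, but the phrase ``the shriek of $\gamma$'' obscures this point, which is exactly what makes the normalization coherent.

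The substantive gap is in the coassociativity argument. When you assemble the two decompositions of the iterated coproduct into pullback cubes, one of the intermediate squares inevitably involves the swapped map $T_{M}\circ j$ rather than $j$ (because the evaluation points on the figure-eight appear in the opposite order). The induced Eilenberg--Moore description then produces not $\Delta^{!}$ but its twist $\tau\circ\Delta^{!}\circ\tilde\tau$, where $\tau$ is the switch on $A^{\otimes 2}$ and $\tilde\tau$ a compatible lift to the bar resolution. Your plan of ``fixing a single global normalization and verifying functoriality'' does not by itself settle this: the twisted map is a priori only $\lambda\cdot\Delta^{!}$ for some nonzero $\lambda\in\mathbb{Q}$, and there is no reason it should equal the chosen $\Delta^{!}$. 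The paper's resolution is to observe that iterating the twist gives $\Delta^{!}=\lambda^{2}\Delta^{!}$, hence $\lambda^{2}=1$, and then to arrange the big diagram so that the twisted shriek appears exactly twice, so the unknown scalar cancels. Without this $\lambda^{2}=1$ step (or an explicit alternative choice of cubes avoiding the swap, which you have not provided), your argument for (2) establishes the identity only up to an undetermined constant---precisely the up-to-constant problem you set out to solve.
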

Fundamental and important maps which appear in string topology are decomposable with appropriate pull-back diagrams via the Eilenberg-Moore isomorphism; see \cite{FT2009} \cite{KMN}. 
We shall build the proof of our main result Theorem \ref{mthm} by combining the idea with tools in rational homotopy theory.
Actually, the first use of such idea due to Kuribayashi, Menichi and the author \cite{KMN} gives an advantage in the study of string operations.  
\\
\indent
Tamanoi \cite[Theorem A]{Ta2010} proved that if $M$ is an oriented closed smooth manifold, both $(Lcop \otimes 1)Lcop$ and $(1\otimes Lcop ) Lcop$ vanish. However $(Lcop \otimes 1)Lcop$ is not necessarily trivial for Gorenstein spaces. We will give an example of non-trivial case of $(Lcop \otimes 1)Lcop$ in Example \ref{compu}. The equation (3) of Theorem \ref{mthm} is called Frobenius compatibility and compare with Tamanoi's result \cite[Theorem 2.2]{Ta2010}.\\
\indent
We now define the product $m$ on ${\mathbb H}_{*}(LM;{\mathbb Q})=H_{*+d}(LM;{\mathbb Q})$ by
\[
m(a\otimes b) =(-1)^{d(|a|+d)}Lp(a\otimes b).
\]
for $a$, $b \in {\mathbb H}_{*}(LM;{\mathbb Q})$; see \cite[Proposition 4]{CJY2004} and \cite[\S 2]{Ta2010} for the sign. 
By Theorem \ref{mthm} (1) and \cite[Lemma 11.8]{KMN}, we have the following.
\begin{cor}
The shifted homology ${\mathbb H}_{*}(LM;{\mathbb Q})=H_{*+d}(LM;{\mathbb Q})$ endowed with $m$ is an associative graded algebra.
\end{cor}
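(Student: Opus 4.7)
The plan is to reduce the associativity of $m$ directly to identity (1) of Theorem \ref{mthm}, using nothing but the definition $m(a\otimes b) = (-1)^{d(|a|+d)} Lp(a\otimes b)$ and the graded Koszul sign conventions. This is in essence a pure sign-bookkeeping exercise; the cited \cite[Lemma 11.8]{KMN} is precisely the translation between the degree-shifted product $m$ on ${\mathbb H}_{*}(LM;\mathbb{Q})$ and the underlying loop product $Lp$ on $H_{*}(LM;\mathbb{Q})$, so the argument consists in invoking it.

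Concretely, I would expand both sides of the desired equation $m(m(a\otimes b)\otimes c) = m(a\otimes m(b\otimes c))$ by substituting the definition of $m$ twice. On the left, using that $Lp$ shifts degrees by $-d$ so that $|m(a\otimes b)|=|Lp(a\otimes b)|=|a|+|b|-d$, two applications of the formula for $m$ collect into a single scalar in front of $Lp(Lp\otimes 1)(a\otimes b\otimes c)$. On the right, the same procedure, together with the Koszul sign picked up when the operator $m$ in the tensor factor $1\otimes m$ passes the element $a$, produces a scalar in front of $Lp(1\otimes Lp)(a\otimes b\otimes c)$.

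Next I would compare the two resulting scalars. A short arithmetic check in the exponents (all terms involving $|a|$, $|b|$ cancel modulo $2$) shows that they differ exactly by the factor $(-1)^{d}$. Since Theorem \ref{mthm}(1) asserts $Lp(Lp\otimes 1)=(-1)^{d}Lp(1\otimes Lp)$, the two sides become equal, establishing associativity.

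The only real obstacle is the sign bookkeeping, which is routine but easy to get wrong; packaging it through \cite[Lemma 11.8]{KMN} avoids redoing this calculation and makes the reduction to Theorem \ref{mthm}(1) transparent. No further ingredient is needed.
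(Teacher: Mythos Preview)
Your proposal is correct and matches the paper's approach exactly: the paper gives no separate proof beyond the sentence ``By Theorem \ref{mthm} (1) and \cite[Lemma 11.8]{KMN}'', and your write-up is precisely the unpacking of that citation. One small imprecision: the Koszul sign does not arise from $1\otimes m$ passing $a$ (since $m$ has degree $0$ on ${\mathbb H}_{*}$) but rather from $1\otimes Lp$ passing $a$ in $H_{*}$, where $Lp$ has degree $-d$; this does not affect the outcome of the computation.
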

Another purpose of this paper is to investigate of triviality or non-triviality of the loop (co)product of a simply-connected Gorenstein space.
In \cite{Ta2010}, Tamanoi proved that the loop coproduct of a connected closed oriented manifold is trivial if its Euler characteristic is zero.
F\'{e}lix and Thomas \cite{FT2009} showed that, in rational coefficient, the loop product of the classifying space of a compact connected Lie group is trivial and the dual loop coproduct is non-trivial.
For rational Gorenstein spaces, the torsion functor description of \cite{KMN} enables us to obtain the following two results about triviality or nontriviality of the operations.
\begin{prop}\label{trivial1}
Let $M$ be a simply-connected ${\mathbb Q}$-Gorenstein space with a minimal Sullivan model $(\Lambda V,d)$. Suppose that the rational homotopy group of $M$ is finite dimensional, that is, $V$ is finite dimensional.
\begin{enumerate}
\item If $V$ is generated by odd degree elements, then the loop product is non-trivial and the loop coproduct is trivial.
\item If $V$ is generated by even degree elements, then the loop product is trivial and the loop coproduct is non-trivial.
\end{enumerate}
\end{prop}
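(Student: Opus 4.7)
The strategy is to apply the torsion-functor description of the operations $Dlp$ and $Dlcop$ from \cite{KMN}, which realizes each of them as an explicit map built from the multiplication and a Gorenstein shriek morphism between $\mathrm{Tor}$-groups over the minimal Sullivan model $(\Lambda V, d)$, and so is computable via a bar/Koszul resolution. Under the hypothesis that $V$ is finite-dimensional and concentrated in one parity, $(\Lambda V, d)$ takes a very simple explicit form and both cases reduce to tractable torsion-functor computations.

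For (1), $V$ is purely odd and finite-dimensional, so $\Lambda V$ is a finite-dimensional graded exterior algebra, $H^{*}(M;{\mathbb Q})$ is finite-dimensional, and the Gorenstein assumption makes $M$ a rational Poincar\'e duality space of formal dimension $d = \sum_{i}|v_{i}|$ with Euler characteristic $\chi(M) = 0$. Non-triviality of $Lp$ is obtained by exhibiting its unit element: the image of the rational fundamental class of $M$ under the constant-loop inclusion $c : M \hookrightarrow LM$ is a non-zero class in ${\mathbb H}_{0}(LM;{\mathbb Q})$ that serves as a unit for $Lp$, in the spirit of \cite{CS}. Triviality of $Lcop$ is obtained by adapting Tamanoi's Euler-characteristic argument \cite[Theorem A]{Ta2010}: in the torsion-functor picture, $Dlcop$ factors through a map whose ``integration along the fiber'' equals multiplication by the Euler class of $M$, which is zero since $\chi(M) = 0$.

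For (2), $V$ is purely even and finite-dimensional; minimality together with the parity of $d|_{V}$ forces $d = 0$, so $(\Lambda V, 0)$ is a graded polynomial algebra and $M$ is rationally a product of Eilenberg--MacLane spaces $K({\mathbb Q}, 2n_{i})$, generalizing the classifying space case treated in \cite{FT2009}. The Koszul resolution of $\Lambda V$ as a module over $(\Lambda V)^{\otimes 2}$ is available in closed form, and substituting it into the torsion-functor description reduces both operations to finite combinatorial identities in a Koszul-type complex. The F\'elix--Thomas vanishing argument for $Lp$ on $H_{*}(LBG;{\mathbb Q})$ extends to this setting, whereas non-triviality of $Lcop$ is established by producing one explicit non-zero summand of $Dlcop$ applied to a convenient test class. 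The principal obstacle will be keeping careful track of the Gorenstein degree shifts and sign conventions of Theorem \ref{mthm} throughout these bar-complex computations, and, in case (1), translating Tamanoi's smooth-manifold Euler-characteristic argument into a purely algebraic statement intrinsic to a Sullivan-model Gorenstein space.
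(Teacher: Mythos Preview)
Your framework---the torsion-functor description of $Dlp$ and $Dlcop$ over the minimal Sullivan model---matches the paper's. The execution differs: the paper does not invoke the unit, Tamanoi's Euler-characteristic argument, or the F\'elix--Thomas $BG$ result, but instead writes down $\Delta^{!}:\mathcal{M}_{M^{I}}\to(\Lambda V)^{\otimes 2}$ explicitly in each case and reads both conclusions off from it. In case~(1), with basis $x_{1},\dots,x_{n}$ of $V=V^{\mathrm{odd}}$, the shriek map is determined by $\Delta^{!}(1)=\prod_{i}(-x_{i}\otimes 1+1\otimes x_{i})$ and $\Delta^{!}(sx_{i_{1}}\cdots sx_{i_{k}})=0$; non-triviality of $Dlp$ follows since $\Delta^{!}(1)$ is a nonzero cycle, and triviality of $Dlcop$ follows from the one-line observation $\mu\circ\Delta^{!}=0$ (each factor multiplies to $-x_{i}+x_{i}=0$). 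In case~(2), parity forces $d=0$ as you note, and $\Delta^{!}$ is determined by $\Delta^{!}(sy_{1}\cdots sy_{m})=1$ and zero on the other monomials; then $\Delta^{!}(1)=0$ together with the explicit model for $H(\mathrm{Comp})$ kills $Dlp$, while a direct evaluation gives $Dlcop\bigl((1\otimes sy_{1}\cdots sy_{m})\otimes(1\otimes 1)\bigr)=(-1)^{m}\neq 0$.

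What this buys: the paper's route dissolves precisely the obstacle you flag. The identity $\mu\circ\Delta^{!}=0$ \emph{is} the Sullivan-model incarnation of ``the Euler class vanishes,'' so no separate algebraic translation of Tamanoi's transversality argument is needed---once $\Delta^{!}$ is in hand, it is a one-line check. Your unit argument for $Lp\neq 0$ in case~(1) is valid (the paper itself remarks on it immediately after stating the proposition), and your plan for case~(2) essentially converges to the paper's computation once you actually carry it out; but for $Lcop=0$ in case~(1) you are proposing to work much harder than necessary, and have left that step as a promissory note rather than an argument.
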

We now remark that if $V$ is generated by odd degree elements, $M$ is a Poincar\'{e} duality space since $H^{*}(M;{\mathbb Q})$ is finite dimensional. Therefore, by \cite{KMN}, ${\mathbb H}_{*}(LM;{\mathbb Q})$ is unital and hence the loop product is not trivial. 
\begin{prop}\label{trivial2}
Let $M$ be a simply-connected ${\mathbb Q}$-Gorenstein space whose minimal Sullivan model $(\Lambda V,d)$ is pure. Moreover, assume that both $V^{\text{odd}}$ and $V^{\text{even}}$ are not zero.
\begin{enumerate}
\item If the differential $d$ is zero, then both the loop product and the loop coproduct on $H_{*}(LM;{\mathbb Q})$ are trivial.
\item If ${\rm dim}\, V^{\text{odd}} > {\rm dim}\, V^{\text{even}}$, then the loop coproduct is trivial.
\end{enumerate}
\end{prop}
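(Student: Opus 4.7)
My plan is to combine the torsion functor description of the dual loop operations from \cite{KMN} with explicit Sullivan model computations, which become very tractable under the purity assumption. Write $V^{\text{even}}=\langle x_1,\ldots,x_n\rangle$ and $V^{\text{odd}}=\langle y_1,\ldots,y_m\rangle$. The free loop space has Sullivan model $(\Lambda V\otimes\Lambda\bar V,D)$, where $\bar V$ is the desuspension of $V$ and $D$ extends the pure differential via the canonical derivation $s$ with $sv=\bar v$, $s\bar v=0$. By \cite{KMN}, the maps $Dlp$ and $Dlcop$ are induced in cohomology by cochain-level compositions involving the multiplication $\mu:\Lambda V\otimes\Lambda V\to\Lambda V$ and its shriek $\mu^!$; the latter corresponds, in an appropriate semifree resolution of $\Lambda V$ over $\Lambda V\otimes\Lambda V$, to multiplication by a cocycle representative $\omega$ of the Gorenstein fundamental class of $M$.

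For part (1), when the Sullivan differential is zero, the loop space model becomes $(\Lambda V\otimes\Lambda\bar V,0)$ with zero differential, so $H^*(LM;\mathbb Q)\cong\Lambda V\otimes\Lambda\bar V$. I would first identify a cocycle representative $\omega$ of the fundamental class in this simple setting; since pure Gorenstein fundamental classes can be represented as products of the top odd monomial with a polynomial in the even generators, and since $d=0$ trivializes the Koszul-type constraint on the polynomial factor, $\omega$ reduces up to scalar to the top monomial in $V^{\text{odd}}$. Tracing the torsion-functor formulae for both $Dlp$ and $Dlcop$, multiplication by $\omega$ forces the output into the annihilator of $y_1\cdots y_m$ in the tensor cochain complex; by the relations $y_j^2=0$ together with the presence of the even generators $x_i$, this annihilator reduces to zero after projection, so both operations vanish on cohomology.

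For part (2), recall the standard fact that for a pure Gorenstein model the fundamental class admits a representative $\omega=y_1\cdots y_m\cdot P$ with $P\in\Lambda V^{\text{even}}$, of total degree equal to the formal dimension $\sum_j|y_j|-\sum_i(|x_i|-1)$. The hypothesis $m>n$ means there are strictly more odd generators than even generators available to absorb them through the differential, so $\omega$ genuinely contains the full product $y_1\cdots y_m$. In the cochain-level formula for $Dlcop$, multiplication by $\omega$ followed by the coproduct forces every output monomial to contain this full product, placing the image inside a subcomplex of $\Lambda V\otimes\Lambda\bar V\otimes\Lambda V\otimes\Lambda\bar V$ that, because of the surplus odd generators, possesses a free exterior factor that must be hit by something coming from $\bar V^{\text{even}}$-type variables that are simply not produced by the composition. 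Hence $Dlcop$ is zero on cohomology.

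The principal obstacle is the rigorous execution of part (2): one must verify that the excess odd generators truly place the output in an acyclic piece of the semifree resolution used to compute the torsion functor, and this requires careful tracking of the resolution combined with the Koszul-type structure dictated by purity. Once that verification is set up, part (1) follows from the same machinery with the further simplification that all Sullivan differentials vanish identically.
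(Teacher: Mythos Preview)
Your proposal misidentifies the object that controls the two operations and, as a result, the proposed vanishing mechanisms do not match what actually happens.

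The diagonal shriek is not ``multiplication by a cocycle $\omega$''. It is a degree-$d$ map of $\Lambda V^{\otimes 2}$-modules $\Delta^{!}:\mathcal{M}_{M^{I}}=\Lambda V\otimes\Lambda V\otimes\Lambda(sV)\to\Lambda V^{\otimes 2}$, determined by its values on the generators of $\Lambda(sV)$. In the pure case the paper writes down these values explicitly (Lemma~\ref{pureshriek}): with odd basis $x_1,\dots,x_n$ and even basis $y_1,\dots,y_m$ (note the paper's convention is opposite to yours), one has $\Delta^{!}(sy_{J^{c}})=\sum\pm f^{i_1}_{j_1}\cdots f^{i_k}_{j_k}\prod_{i\neq i_1,\dots,i_k}(-x_i\otimes 1+1\otimes x_i)$, where the $f^{r}_{j}$ are the coefficients appearing in $D(sx_r)$. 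In particular $\Delta^{!}(1)$ involves a product of $m$ factors $f$, and $\Delta^{!}(sy_1\cdots sy_m)=\prod_{i=1}^{n}(-x_i\otimes 1+1\otimes x_i)$. Your claim that in the $d=0$ case the relevant class is ``up to scalar the top monomial in $V^{\text{odd}}$'' is therefore wrong: when $d=0$ all $f^{r}_{j}=0$, so $\Delta^{!}(1)=0$, not the top odd monomial.

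The actual triviality criteria are extremely simple and are what you are missing. From the torsion-functor models (\ref{mdlp}) and (\ref{mdlcp}) one reads off directly:
\[
\Delta^{!}(1)=0\ \Longrightarrow\ Dlp=0,\qquad \mu\circ\Delta^{!}=0\ \Longrightarrow\ Dlcop=0.
\]
For part~(1), $d=0$ gives $f^{r}_{j}=0$, hence $\Delta^{!}(1)=0$ and also every nonzero value of $\Delta^{!}$ contains at least one factor $(-x_i\otimes 1+1\otimes x_i)$, so $\mu\Delta^{!}=0$; both operations vanish. For part~(2), $n>m$ forces every term of $\Delta^{!}(sy_{J^{c}})$ to retain at least $n-|J|\geq n-m>0$ factors $(-x_i\otimes 1+1\otimes x_i)$, whence $\mu\Delta^{!}=0$ and the loop coproduct vanishes. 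There is no ``annihilator of $y_1\cdots y_m$'' argument and no acyclicity argument; the whole point is the elementary identity $\mu(-x_i\otimes 1+1\otimes x_i)=0$ for odd $x_i$. Your sketch for (2), placing the image in an ``acyclic piece of the semifree resolution'', is not the mechanism and would be difficult to make precise; the paper's route bypasses any such analysis entirely once $\Delta^{!}$ is written down.
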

See Definition \ref{pure} for pure Sullivan models.
For example, even dimensional spheres, complex projective spaces and homogeneous spaces have pure minimal Sullivan models.
As a computational example, we consider the loop (co)product on the Borel construction $ES^{1} \times_{S^{1}}{\mathbb C}P^2$. 
That gives the first example for which both of the product and the coproduct are non trivial in higher degree; 
see Example \ref{compu} and compare with the result \cite[Theorem B (2)]{Ta2010} due to Tamanoi.\\
\indent
The organization of this paper is as follows. In Section 2, we recall a fundamental definitions and facts for string topology of Gorenstein spaces. In Sections 3 and 4, we give the definition of dual loop product and coproduct. A proof of an associativity of the operations is also stated in the section. In Section 5, we show that the loop product and the loop coproduct satisfy Frobenius compatibility. A proof of Proposition \ref{trivial1}, \ref{trivial2} and a computational example are presented in Section 6.


\section{Preliminaries}
We first fix notation and terminology and recall some results for string topology on Gorenstein spaces.
For a topological space $X$, let $LX$ be the free loop space of $X$, let $X^{I}$ be the space consisting of all continuous maps from the closed unit interval $I=[0,1]$ to the space $X$ and let $ev _{0}:LX\to X$ be the evaluation map at $0$. Denote by $LX\times _{X} LX$ the pull-back 
\[
\xymatrix{
LX\times _{X}LX \ar[r]^-{pr_{1}} \ar[d]_-{pr_{2}} & LX \ar[d]^{ev_{0}}\\
LX \ar[r]_{ev_{0}} & X.
}
\]
For a fixed $s\in [0,1]$, let $ev_{i,s}:LX\times _{X}LX \to X$ be the map which is defined as
$ev_{i,s}=ev_{s}\circ pr_{i}$ 
for $i=1,2$. In particular, we put $ev_{0} = ev _{i,0}: LX\times _{X}LX \to X$. \\
\indent
We next recall the definition of Gorenstein spaces. Let $A$ be a differential graded algebra, $M$ and $N$ differential graded $A$-modules. We denote by ${\rm Ext}_{A}(M,N)$ the differential Ext in the sense of Moore, that is, the homology of ${\rm Hom}_{A}(P,M)$, where $P$ is a $A$-semifree resolution of $M$; see \cite[Appendix]{FHT1988}.

\begin{defn}[{\cite[\S 3]{FHT1988}}]
{\rm
A differential graded augmented algebra over a field ${\mathbb K}$, $(A,d)$, is called a {\it Gorenstein algebra} of formal dimension $d$ if
\begin{eqnarray*}
{\rm dim} \, {\rm Ext}_{A}^{*}({\mathbb K}, A) = \left\{ \begin{array}{ll}
 0 & (*\neq d) \\
 1 & (*=d).
 \end{array} \right.
 \end{eqnarray*}
A path-connected space $M$ is called a ${\mathbb K}$-{\it Gorenstein space} of formal dimension $d$ if the normalized singular cochain algebra $C^{*}(M)$ with coefficients in ${\mathbb K}$ is a Gorenstein algebra of formal dimension $d$.
}
\end{defn}

For example, any ${\mathbb K}$-Poincar\'{e} duality space is a ${\mathbb K}$-Gorenstein space. Given $F\to E \to B$ a fibration of simply connected spaces of finite type over a field ${\mathbb K}$. Then, $B$ is a ${\mathbb K}$-Gorenstein space and $F$ is a ${\mathbb K}$-Poincar\'{e} duality space if and only if $E$ is a ${\mathbb K}$-Gorenstein space (\cite[Theorem 4.3]{FHT1988}, \cite[Theorem 1.2]{Mu2007}). In particular, the classifying space of a compact connected Lie group $G$ is a Gorenstein space with formal dimension $-{\rm dim}\, G$.\\
\indent
The following is a key theorem for string topology on Gorenstein spaces.

\begin{thm}[{\cite[Theorem 12]{FT2009}}] \label{key}
Let $M$ be a simply-connected ${\mathbb K}$-Gorenstein space of formal dimension $d$ whose cohomology with coefficients in ${\mathbb K}$ is of finite type. Then
\[
{\rm Ext}_{C^{*}(M^{\times n})}^{*}(C^{*}(M),C^{*}(M^{\times n}))\cong H^{*-(n-1)d}(M)
\]
where $C^{*}(M)$ is considered a $C^{*}(M^{\times n})$-module via the diagonal map $\Delta : M \to M^{\times n}$.
\end{thm}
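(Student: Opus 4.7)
The plan is to proceed by induction on $n$, with the case $n=2$ as the main geometric input and larger $n$ reduced to it via a Grothendieck change-of-rings spectral sequence; throughout, the Eilenberg--Zilber quasi-isomorphism $C^{*}(M)^{\otimes n} \simeq C^{*}(M^{\times n})$ allows all Ext computations to be carried out on the tensor product side.

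For the base case $n=2$, I would replace $C^{*}(M)$ by a minimal Sullivan model $(\Lambda V, d)$ so that $(\Lambda V)^{\otimes 2}$ models $M\times M$ and the diagonal corresponds to the multiplication $\mu:(\Lambda V)^{\otimes 2}\to \Lambda V$. The kernel of $\mu$ is generated by $v\otimes 1 - 1\otimes v$ for $v\in V$, which in characteristic zero behave as a Koszul-regular family; this yields a semifree resolution
\[
P = \bigl((\Lambda V)^{\otimes 2}\otimes \Lambda(sV'),\, D\bigr) \xrightarrow{\simeq} \Lambda V,
\]
where $sV'$ is a shifted copy of $V$ in degree $|v|-1$ and $D(sv') = v\otimes 1 - 1\otimes v$ up to a correction coming from $d$. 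Applying $\mathrm{Hom}_{(\Lambda V)^{\otimes 2}}(-,(\Lambda V)^{\otimes 2})$ and taking cohomology produces the desired Ext, which the Gorenstein hypothesis $\mathrm{Ext}_{\Lambda V}(\mathbb{Q},\Lambda V)\cong\mathbb{Q}$ concentrated in degree $d$ identifies with $\Lambda V$ shifted by $-d$; this gives $\mathrm{Ext}^{*}_{(\Lambda V)^{\otimes 2}}(\Lambda V,(\Lambda V)^{\otimes 2})\cong H^{*-d}(M)$.

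For the inductive step, factor $\Delta_n = (\Delta_2\times \mathrm{id}_{M^{n-2}})\circ \Delta_{n-1}$ and apply the Grothendieck spectral sequence attached to the composite $C^{*}(M^{\times n}) \to C^{*}(M^{\times(n-1)}) \to C^{*}(M)$:
\[
\begin{aligned}
E_2^{p,q} &= \mathrm{Ext}^{p}_{C^{*}(M^{\times(n-1)})}\!\bigl(C^{*}(M),\,\mathrm{Ext}^{q}_{C^{*}(M^{\times n})}(C^{*}(M^{\times(n-1)}),\,C^{*}(M^{\times n}))\bigr) \\
&\Longrightarrow \mathrm{Ext}^{p+q}_{C^{*}(M^{\times n})}(C^{*}(M),\,C^{*}(M^{\times n})).
\end{aligned}
\]
By the K\"unneth theorem combined with the $n=2$ case applied to the first two tensor factors, the inner Ext is $C^{*}(M^{\times(n-1)})$ shifted by $-d$; the spectral sequence collapses and the induction hypothesis supplies the further shift $-(n-2)d$, yielding a total shift of $-(n-1)d$.

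The main obstacle will be the base case: the Gorenstein hypothesis a priori only records a one-dimensional top class for $\mathrm{Ext}_{\Lambda V}(\mathbb{Q},\Lambda V)$, whereas the inductive machinery requires the stronger identification of $\mathrm{Ext}_{(\Lambda V)^{\otimes 2}}(\Lambda V,(\Lambda V)^{\otimes 2})$ as a free rank-one $\Lambda V$-module generated in degree $d$. This refinement should follow from a direct analysis of the top class of the dual Koszul complex, or alternatively from a rigidity argument combining the one-dimensional Ext calculation with the fact that both sides are cyclic $\Lambda V$-modules with matching Ext into $\mathbb{Q}$.
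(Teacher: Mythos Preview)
The paper does not prove this theorem: it is quoted verbatim as \cite[Theorem~12]{FT2009} and used as a black box, so there is no in-paper argument to compare against. What follows is an assessment of your outline on its own merits.

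Two structural issues. First, the theorem is stated over an arbitrary field $\mathbb{K}$, but your entire base case runs through a minimal Sullivan model, which is only available in characteristic zero; over $\mathbb{F}_p$ there is no commutative cochain model and the Koszul-type resolution you write down does not exist in that form. If you only intend to cover the rational case (which is all the present paper needs), you should say so.

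Second, and more seriously, the gap you flag in the base case is genuine and your two suggested fixes are not yet arguments. Knowing that $\mathrm{Ext}_{\Lambda V}(\mathbb{Q},\Lambda V)$ is one-dimensional in degree $d$ does not by itself force $\mathrm{Ext}_{(\Lambda V)^{\otimes 2}}(\Lambda V,(\Lambda V)^{\otimes 2})$ to be free of rank one over $\Lambda V$: a priori this Ext could have $\Lambda V$-torsion that is killed upon passing to $\mathbb{Q}$. The actual mechanism in \cite{FT2009} is to observe that the semifree resolution $P=(\Lambda V)^{\otimes 2}\otimes\Lambda(sV)$ of $\Lambda V$ over $(\Lambda V)^{\otimes 2}$ restricts, after applying $\mathbb{Q}\otimes_{\Lambda V}(-)$ along one tensor factor, to the acyclic closure $\Lambda V\otimes\Lambda(sV)\xrightarrow{\simeq}\mathbb{Q}$; one then compares $\mathrm{Hom}_{(\Lambda V)^{\otimes 2}}(P,(\Lambda V)^{\otimes 2})$ with $\mathrm{Hom}_{\Lambda V}(\Lambda V\otimes\Lambda(sV),\Lambda V)$ and uses that $(\Lambda V)^{\otimes 2}$ is $\Lambda V$-free to control the passage. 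Neither a ``direct analysis of the top class'' nor the cyclicity remark you mention supplies this comparison.

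Your inductive step via the Grothendieck change-of-rings spectral sequence is sound in outline, and indeed this is essentially how the general $n$ is handled once $n=2$ is in place.
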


In this paper, we consider the case that a ground field ${\mathbb K}$ is the field of rational numbers ${\mathbb Q}$.
Now, recall the fundamental facts on rational homotopy theory. For details about rational homotopy theory, see \cite{Rat} or \cite{Rat2} for example.
A {\it minimal Sullivan model} for a simply-connected space $X$ with finite type is a free commutative differential graded algebra over ${\mathbb Q}$, $(\Lambda V ,d)$, with a graded vector space $V=\bigoplus _{i\geq 2}V^{i}$ over ${\mathbb Q}$ where each $V^{i}$ is of finite dimension and $d$ is decomposable; that is, $d(V)\subset \Lambda ^{\geq 2}V$.
Moreover, $(\Lambda V,d)$ is equipped with a quasi-isomorphism $(\Lambda V,d) \stackrel{\simeq }{\longrightarrow } A_{{\rm PL}}(X)$ to the commutative differential graded algebra $A_{{\rm PL}}(X)$ of differential polynomial forms on $X$. Observe that, as algebras, $H^{*}(\Lambda V,d)\cong H^{*}(A_{{\rm PL}}(X))\cong H^{*}(X;{\mathbb Q})$.\\
\indent
 Let $M$ be a simply-connected ${\mathbb Q}$-Gorenstein space of dimension $d$ and denote by $\rho : A=\Lambda V \to A_{{\rm PL}}(M)$ a minimal Sullivan model for $M$. 
Then, by Theorem \ref{key},
\[
H^{d}({\rm Hom}_{A^{\otimes 2}}({\mathbb B}, A^{\otimes 2})) \cong  {\rm Ext}^{d}_{A^{\otimes 2}}(A, A^{\otimes 2}) \cong H^{0}(M)= {\mathbb K},
\]
where $\varepsilon : {\mathbb B}=B(A,A,A)\to A$ is the two-sided bar resolution of $A$ (\cite[Definition 5.51]{Rat2}), and denote by
\[
\Delta ^{!}:{\mathbb B} \longrightarrow A^{\otimes 2}
\]
the map of right $A^{\otimes 2}$-modules which corresponds to a generator of $H^{0}(M)= {\mathbb K}$. We will define the dual loop product and coproduct in \S 2 and \S 3 by using $\Delta ^{!}$.\\
\indent 
We conclude this section by recalling the map which is called the Eilenberg-Moore map in rational coefficient case Consider a pull-back diagram
\[
\xymatrixrowsep{0.5cm}
\xymatrix{
E_{f} \ar[r]^{\tilde{f}} \ar[d]_{q} & E \ar[d]^{p} \\
X \ar[r]^{f} & B
}
\]
in which $p:E\to B$ is a fibration and $B$ is a simply-connected space. The induced map $f^{*}: A_{{\rm PL}}(B)\to  A_{{\rm PL}}(X)$ gives $A_{{\rm PL}}(X)$ a right $A_{{\rm PL}}(B)$-module structure and let $\tilde{\varepsilon } : P\to A_{{\rm PL}}(X) $ be a semifree resolution as left $A_{{\rm PL}}(B)$-modules. Then, we have the quasi-isomorphism called the Eilenberg-Moore map
\[
{\rm EM}:P \otimes _{A_{{\rm PL}}(B)} A_{{\rm PL}}(E) \longrightarrow A_{{\rm PL}}(E_{f})
\]
defined by ${\rm EM}(u\otimes x)=q^{*}\tilde{\varepsilon }(u)\cdot \tilde{f}^{*}(x)$ for $u\otimes x \in P \otimes _{A_{{\rm PL}}(B)} A_{{\rm PL}}(E)$. For details, see \cite{ss}, \cite{Sm1967} for example.


\section{Dual loop product and associativity}
In this section, we first recall the definition of the dual loop product on a simply-connected Gorenstein space $M$. Denote by $\rho :A \to A_{{\rm PL}}(M)$ a minimal Sullivan model for $M$.
The pull-back diagram
\[
\xymatrix{
LM \times _{M} LM \ar[rr]^{{\rm inc}} \ar[d]_{ev_{0}} & & LM \times LM \ar[d]^{ev_{0}\times ev_{0}} \\
M      \ar[rr]^{\Delta }               & & M \times M
}
\]
gives rise to the Eilenberg-Moore map ${\rm EM} : {\mathbb B}\otimes _{A^{\otimes 2}}A_{{\rm PL}}(LM\times LM) \to A_{{\rm PL}}(LM\times _{M}LM)$.
The {\it dual loop product} $Dlp$ is the composite map
\[
\xymatrixrowsep{0.6cm}
\xymatrix{
Dlp : H^{*}(LM) \ar[r]^-{H({\rm Comp} )} & H^{*}(LM\times _{M}LM) \ar[d]^-{{\rm EM}^{-1}}_-{\cong }& \\
 & H^{*}({\mathbb B} \otimes _{A^{\otimes 2}}A_{{\rm PL}}(LM\times LM)) \ar[r]^-{H(\Delta ^{!}\otimes 1)} & H^{*}(LM\times LM)
}
\]
Here $\Delta ^{!}$ is the map stated in \S 2 and the map ${\rm Comp} :LM\times _{M}LM \to LM $ is the concatenation of loops, namely,
\begin{eqnarray*}
{\rm Comp} (\gamma_{1}, \gamma _{2} )(t) = \left\{ \begin{array}{ll}
\gamma _{1} (2t) & (0\leq t\leq \frac{1}{2}) \\
\gamma _{2} (2t-1) & (\frac{1}{2}\leq t\leq 1)
\end{array} \right.
\end{eqnarray*}
for $(\gamma_{1}, \gamma _{2} ) \in LM\times _{M} LM$ and $t\in [0,1]$. 
\begin{prop}\label{dlp}
The dual loop product satisfy the identity
\[
(Dlp \otimes 1)Dlp = (-1)^{d}(1\otimes Dlp)Dlp.
\]
\end{prop}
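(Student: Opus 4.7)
The plan is to realize both iterated operations as instances of a common construction attached to the triple pull-back, and then to reduce the desired identity to a scalar comparison in a one-dimensional Ext group.

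First I would set up the triple pull-back
\[
\xymatrix{
LM\times _{M}LM\times _{M}LM \ar[r]^-{\mathrm{inc}_3} \ar[d]_-{ev_0} & LM^{\times 3} \ar[d]^{ev_{0}^{\times 3}} \\
M \ar[r]^{\Delta^{(3)}} & M^{\times 3}
}
\]
with $\Delta^{(3)}=(\Delta\times 1_M)\circ\Delta=(1_M\times\Delta)\circ\Delta$, and note that the two triple concatenations $(\mathrm{Comp}\times 1_{LM})\circ\mathrm{Comp}$ and $(1_{LM}\times\mathrm{Comp})\circ\mathrm{Comp}$ from $LM\times_M LM\times_M LM$ to $LM$ are interval-reparametrization-homotopic and thus induce the same cohomology map, which I write $\mathrm{Comp}_3^*$. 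Choosing a semifree resolution $\mathbb{B}^{(3)}$ of $A$ over $A^{\otimes 3}$ and invoking the naturality of the Eilenberg-Moore isomorphism for composed pull-backs, I can factor each side of the claimed identity as
\[
H^*(LM)\xrightarrow{\mathrm{Comp}_3^*} H^*(LM\times_M LM\times_M LM)\xrightarrow[\cong]{\mathrm{EM}^{-1}} H^*(\mathbb{B}^{(3)}\otimes_{A^{\otimes 3}}A_{PL}(LM^{\times 3}))\xrightarrow{(f\otimes 1)_*} H^*(LM^{\times 3}),
\]
where $f:\mathbb{B}^{(3)}\to A^{\otimes 3}$ is a right $A^{\otimes 3}$-module map of degree $2d$: for $(Dlp\otimes 1)Dlp$ one takes $f$ to be a lift of $(\Delta^!\otimes 1_A)\circ\Delta^!$, and for $(1\otimes Dlp)Dlp$ a lift of $(1_A\otimes\Delta^!)\circ\Delta^!$.

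Applying Theorem \ref{key} with $n=3$ yields
\[
\mathrm{Ext}^{2d}_{A^{\otimes 3}}(A,A^{\otimes 3})\cong H^0(M)=\mathbb{Q},
\]
so $f$ is determined up to scalar by its class in this one-dimensional group. A short verification (for instance on the minimal Sullivan model of $M$, where $\Delta^!$ can be written down explicitly) shows that both constructed maps represent non-zero Ext classes, so the two sides of the identity agree up to a non-zero $\lambda\in\mathbb{Q}$.

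The hard part is to show $\lambda=(-1)^d$. Intuitively, the sign arises from the Koszul rule governing the interchange of the degree-$d$ shriek $\Delta^!$ with the identity on the opposite tensor factor of $A$ when one compares the two lifts used to build $f$. Making this rigorous requires careful bookkeeping of signs through the chosen resolution-level lifts of $\Delta^!$, through the bar complex $\mathbb{B}^{(3)}$, and through the Eilenberg-Moore map; this sign computation is the main technical obstacle of the proof.
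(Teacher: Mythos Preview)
Your overall framework—factoring both iterates through the triple pull-back, the triple concatenation $\mathrm{Comp}'$, and an Eilenberg--Moore isomorphism—is correct and coincides with the paper's strategy. The gap is that you explicitly leave the determination of the scalar $\lambda$ undone, calling it ``the main technical obstacle of the proof.'' But the entire content of the proposition \emph{is} this constant: the paper's stated purpose (see the Introduction) is precisely to resolve the ``up to constant problem,'' and knowing that the two sides are proportional is exactly where F\'elix--Thomas already left matters. Your appeal to the one-dimensionality of $\mathrm{Ext}^{2d}_{A^{\otimes 3}}(A,A^{\otimes 3})$ is a valid observation but does not advance the argument—it only reproduces the proportionality you already have, without pinning down $\lambda$. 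Your ``short verification'' that both Ext classes are nonzero is also not available in general: $\Delta^{!}$ has no uniform explicit formula on the minimal model of an arbitrary $\mathbb{Q}$-Gorenstein space.

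The paper avoids the obstacle by a concrete choice of resolution: it takes $\mathbb{B}^{(3)}=\mathbb{B}\otimes_A\mathbb{B}$, for which \emph{both} of the lifts you describe are represented by the \emph{same} map $\Delta^{!}\otimes\Delta^{!}$. The two iterated products are then compared to the common expression $H((\Delta^{!}\otimes\Delta^{!})\otimes 1)\circ\mathrm{EM}_3^{-1}\circ H(\mathrm{Comp}')$ via explicit chain isomorphisms
\[
\Gamma:\ \mathbb{B}\otimes_{A^{\otimes 2}}\bigl((\mathbb{B}\otimes A)\otimes_{A^{\otimes 3}}-\bigr)\ \cong\ (\mathbb{B}\otimes_A\mathbb{B})\otimes_{A^{\otimes 3}}-,\qquad
\Gamma':\ \mathbb{B}\otimes_{A^{\otimes 2}}\bigl((A\otimes\mathbb{B})\otimes_{A^{\otimes 3}}-\bigr)\ \cong\ (\mathbb{B}\otimes_A\mathbb{B})\otimes_{A^{\otimes 3}}-.
\]
All of the sign bookkeeping is thereby localized to a single commutative square (labeled $(\mathrm{V}')$), where commuting the two degree-$d$ maps in $\Delta^{!}\otimes\Delta^{!}$ produces the factor $(-1)^{d}$. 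That is the missing step in your proposal; once you fix the resolution as $\mathbb{B}\otimes_A\mathbb{B}$ and write down $\Gamma,\Gamma'$ explicitly, the sign falls out of one Koszul exchange rather than being spread over the whole construction.
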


\begin{proof}
We first consider the commutative diagram:
\[
\xymatrixrowsep{0.5cm}
\xymatrixcolsep{0.2cm}
\xymatrix{
& LM \times _{M} LM \times _{M} LM \ar[rrrr]^{{\rm inc}} \ar[ld]_-{ev_{0}} \ar[dd]_(0.3){=} &&&& LM^{\times 3} \ar[ld]_-{ev_{0}^{\times 3}} \ar[dd]_{=}\\
M \ar[rrrr]^{(\Delta \times 1)\Delta} \ar[dd]_-{=} &&&& M^{\times 3} \ar[dd]_(0.3){=}&\\
& LM \times _{M} LM \times _{M} LM \ar[rr]^-{{\rm inc}} \ar[ld]_{ev_{0}} & & (LM \times _{M}LM)\times LM \ar[rr]^(0.6){{\rm inc}} \ar[ld]_{ev_{0}\times ev_{0}} & & LM^{\times 3} \ar[ld]_{ev_{0}^{\times 3}}\\
M \ar[rr]^{\Delta} & & M^{\times 2} \ar[rr]^-{\Delta \times 1} & & M^{\times 3} & 
}
\]
It is readily seen that the two bottom squares and the top square are pull-back diagrams and we denote by
\begin{align*}
& {\rm EM}_{1}: ({\mathbb B}\otimes A)\otimes _{A^{\otimes 3}}A_{{\rm PL}}(LM ^{\times 3}) \longrightarrow A_{{\rm PL}}((LM \times _{M} LM)\times LM), \\
& {\rm EM}_{2}:{\mathbb B} \otimes _{A^{\otimes 2}}A_{{\rm PL}}((LM \times _{M} LM)\times LM ) \longrightarrow  A_{{\rm PL}}(LM\times _{M} LM \times _{M} LM),\\
& {\rm EM}_{3}: ({\mathbb B}\otimes _{A}{\mathbb B}) \otimes _{A^{\otimes 3}}A_{{\rm PL}}(LM ^{\times 3}) \longrightarrow A_{{\rm PL}}(LM\times _{M} LM \times _{M} LM)
\end{align*}
the Eilenberg-Moore maps of the diagram, respectively. We here remark that $\varepsilon \otimes 1 : {\mathbb B}\otimes A \to A\otimes A$ is a semifree resolution of $A^{\otimes 2}$ as $A^{\otimes 3}$-modules with the $A^{\otimes 3}$-module structure of $A^{\otimes 2}$ given by $(\text{product})\otimes 1:A^{\otimes 3} \to A^{\otimes 2}$, and $\varepsilon \cdot \varepsilon :{\mathbb B}\otimes _{A}{\mathbb B} \to A$ is a semifree resolution of $A$ as $A^{\otimes 3}$-modules.
Then we have the following commutative diagram:
\begin{equation} \label{dlp}
\begin{split}
\xymatrixrowsep{0.7cm}
\xymatrixcolsep{1.2cm}
\xymatrix{
H^{*}(LM) \ar[d]_{H({\rm Comp})} \ar[r]^{=} \ar@{}[rd]|{{\rm (I)}} & H^{*}(LM) \ar[d]^{H({\rm Comp}')}
\\
H^{*}(LM\times _{M}LM) \ar[r]^-{H({\rm Comp}\times 1)}  \ar[d]_{\rm{EM}^{-1}}^{\cong }  \ar@{}[rd]|{{\rm (I\hspace{-.1em}I)}} & H^{*}(LM \times _{M} LM \times _{M} LM)  \ar[d]^{\rm{EM}_{2}^{-1}}_{\cong }\\
H^{*}({\mathbb B}\otimes _{A^{\otimes 2}}A_{{\rm PL}}(LM^{\times 2}))\ar[d]_{H(\Delta^{!} \otimes 1)} \ar[r]^-{ H(1\otimes ({\rm Comp} \times 1)^{*} )} \ar@{}[rdd]|(0.3){{\rm (I\hspace{-.1em}I\hspace{-.1em}I)}}& H^{*}({\mathbb B} \otimes _{A^{\otimes 2}}A_{{\rm PL}}((LM\times _{M} LM) \times LM))  \ar[ldd]_{H(\Delta^{!} \otimes 1)}  \ar[dd]^-{(1\otimes \rm{EM}_{1})^{-1}}_-{\cong } \ar@{}[lddd]|{{\rm (I\hspace{-.1em}V)}}\\
H^{*}(LM^{\times 2}) \ar[d]_{H({\rm Comp} \times 1)} & \\
H^{*}((LM\times _{M}LM)\times LM) \ar[d]_{\rm{EM}_{1}^{-1}}^-{\cong } & H^{*}({\mathbb B} \otimes _{A^{\otimes 2}}(({\mathbb B}\otimes A) \otimes _{A^{\otimes 3}}A_{{\rm PL}}(LM^{\times 3})))  \ar[dl]_{H(\Delta ^{!} \otimes ( 1\otimes 1) \otimes 1)} \ar[dd]^{\Gamma }_{\cong }\\
H^{*}(({\mathbb B}\otimes A)\otimes _{A^{\otimes 3}}A_{{\rm PL}}(LM^{\times 3}))  \ar[d]_{H( ( \Delta ^{!}\otimes 1) \otimes 1)} \ar@{}[rd]|(0.4){{\rm (V)}}& \\
H^{*}(LM^{\times 3})   & H^{*}(({\mathbb B}\otimes _{A} {\mathbb B})\otimes _{A^{\otimes 3}}A_{{\rm PL}}(LM^{\times 3})). \ar[l]_-{H( ( \Delta^{!}\otimes \Delta^{!} ) \otimes 1)} 
}
\end{split}
\end{equation}
Here, the map ${\rm Comp}' : LM\times _{M}LM\times _{M} LM \to LM$ is the composition map given by
\begin{eqnarray*}
{\rm Comp} '  (\gamma_{1}, \gamma _{2} , \gamma _{3})(t) = \left\{ \begin{array}{ll}
 \gamma _{1} (3t) & (0\leq t\leq \frac{1}{3}) \\
 \gamma _{2} (3t-1) & (\frac{1}{3}\leq t\leq \frac{2}{3}) \\
 \gamma _{3} (3t-2) & (\frac{2}{3}\leq t\leq 1).
\end{array} \right.
\end{eqnarray*}
The left $A^{\otimes 2}$-module structure of $({\mathbb B}\otimes A)\otimes _{A^{\otimes 3}} A_{{\rm PL}}(LM^{\times 3})$ given by the natural right $A^{\otimes 2}$-module structure of ${\mathbb B}\otimes A$, that is,
\[
 (b_{1}\otimes b_{2})\cdot ((u\otimes a)\otimes x) = (-1)^{|b_{1}||u|+|b_{2}|(|u|+|a|)}(ub_{1}\otimes ab_{2})\otimes x
\]
for $ (u\otimes a)\otimes x \in ({\mathbb B}\otimes A)\otimes _{A^{\otimes 3}}A_{{\rm PL}}(LM^{\times 3})$ and $b_{1}\otimes b_{2} \in A^{\otimes 2}$, makes ${\rm EM}_{1}$ a left $A^{\otimes 2}$-module map. The chain map $\Gamma $ is defined by 
\[
\Gamma (v\otimes (u\otimes a)\otimes x)=(-1)^{|u||v|}(u\otimes va)\otimes x
\]
for $v\otimes (u\otimes a)\otimes x$ in ${\mathbb B} \otimes _{A^{\otimes 2}}(({\mathbb B}\otimes A) \otimes _{A^{\otimes 3}}A_{{\rm PL}}(LM^{\times 3}))$, and we also see that $\Gamma $ is an isomorphism with the inverse map $\Gamma ^{-1}((u \otimes v)\otimes x)=(-1)^{|u||v|} v\otimes (u\otimes 1)\otimes x$.\\
\indent
We now check the commutativity of the diagram \eqref{dlp}.
Since the composite ${\rm Comp} ({\rm Comp} \times 1)$ is homotopic to ${\rm Comp} '$, the square ${\rm (I)}$ commutes.
The equation ${\rm (I\hspace{-.1em}I)}:H({\rm Comp} \times 1){\rm EM}={\rm EM}_{2}H(1\otimes ({\rm Comp} \times 1)^{*} )$ is shown by the naturality of the Eilenberg-Moore map and the commutative diagram:
\[
\xymatrixrowsep{0.4cm}
\xymatrixcolsep{0.4cm}
\xymatrix{
& LM \times _{M} LM \times _{M} LM \ar[rr]^{ {\rm inc} } \ar[ld]_{ev_{0}} \ar[dd]^(0.3){{\rm Comp} \times 1 } & & ( LM \times _{M}LM )\times LM \ar[ld]_(0.6){ev_{0}\times ev_{0}} \ar[dd]_{{\rm Comp} \times 1}\\
M \ar[rr]^(0.3){\Delta} \ar[dd]_-{= }& & M^{\times 2} \ar[dd]^(0.3){=} \\
& LM\times _{M} LM \ar[rr]^(0.4){{\rm inc} } \ar[ld]_{ev_{0}} & & LM \times LM \ar[ld]^{ev_{0}^{\times 2}}\\
M\ar[rr]^{\Delta } & & M ^{\times 2}.
}
\]
A straightforward calculation shows a commutativity of the other squares ${\rm (I\hspace{-.1em}I\hspace{-.1em}I)}$, ${\rm (I\hspace{-.1em}V)}$ and ${\rm (V)}$, and also shows that the equality ${\rm EM}_{3} = {\rm EM}_{2}({\rm EM}_{1}\otimes 1)\Gamma ^{-1}$.
By the definition of dual loop products, 
\[
(Dlp \otimes 1)Dlp = H((\Delta^{!} \otimes 1) \otimes 1){\rm EM}_{1}^{-1}H({\rm Comp} \times 1) H(\Delta ^{!} \otimes 1){\rm EM}^{-1}H({\rm Comp})
\]
holds.
Therefore, the commutativity of the diagram \eqref{dlp} implies that
\[
(Dlp \otimes 1)Dlp = H((\Delta ^{!}\otimes \Delta^{!})\otimes 1){\rm EM}_{3}^{-1}H({\rm Comp} ').
\]
We next observe the composite $(1\otimes Dlp) Dlp$. Consider the commutative diagram
\[
\xymatrixrowsep{0.4cm}
\xymatrixcolsep{0.4cm}
\xymatrix{
& LM \times _{M} LM \times _{M} LM \ar[rrrr]^{{\rm inc}} \ar[ld]^{ev_{0}} \ar[dd]^(0.3){=} &&&& LM^{\times 3} \ar[ld]^{ev_{0}^{\times 3}} \ar[dd]^{=}\\
M \ar[rrrr]^{(1\times  \Delta )\Delta} \ar[dd]^-{=} &&&& M^{\times 3} \ar[dd]^(0.3){=}&\\
& LM \times _{M} LM \times _{M} LM \ar[rr]^{{\rm inc}} \ar[ld]^{ev_{0}} & & LM \times (LM \times _{M}LM) \ar[rr]^{{\rm inc}} \ar[ld]^{ev_{0}\times ev_{0}} & & LM^{\times 3}\ar[ld]^{ev_{0}^{\times 3}}\\
M \ar[rr]_{\Delta} & & M^{\times 2} \ar[rr]_-{1\times \Delta } & & M^{\times 3} & 
}
\]
where the two bottom squares and the top square are pull-back diagrams, and denote by
\begin{align*}
&{\rm EM}_{4} : (A\otimes {\mathbb B})\otimes _{A^{\otimes 3}} A_{{\rm PL}}(LM^{\times 3})\longrightarrow A_{{\rm PL}}(LM\times (LM\times _{M}LM)),\\
&{\rm EM}_{5} : {\mathbb B}\otimes _{A^{\otimes 2}}A_{{\rm PL}}(LM\times (LM\times _{M}LM))\longrightarrow A_{{\rm PL}}(LM\times _{M}LM\times _{M}LM) 
\end{align*}
the Eilenberg-Moore map of the two bottom squares.
Then, we have the commutative diagram:
\begin{equation} \label{dlp'}
\begin{split}
\xymatrixcolsep{1.2cm}
\xymatrixrowsep{0.7cm}
\xymatrix{
H^{*}(LM) \ar[d]_{H({\rm Comp})} \ar[r]^{=}  \ar@{}[rd]|{{\rm (I')}} & H^{*}(LM) \ar[d]^{H({\rm Comp}')} \\
H^{*}(LM\times _{M}LM) \ar[r]^-{H(1\times {\rm Comp})} \ar[d]_{\rm{EM}^{-1}}^{\cong } \ar@{}[rd]|{{\rm (I\hspace{-.1em}I')}}& H^{*}(LM \times _{M} LM \times _{M} LM) \ar[d]^{\rm{EM}_{5}^{-1}}_{\cong }\\
H^{*}({\mathbb B}\otimes _{A^{\otimes 2}}A_{{\rm PL}}(LM^{\times 2}))  \ar[d]_{H(\Delta^{!} \otimes 1)} \ar[r]^-{H(1\otimes (1\times {\rm Comp})^{*})} \ar@{}[rdd]|(0.3){{\rm (I\hspace{-.1em}I\hspace{-.1em}I')}} & H^{*}({\mathbb B} \otimes _{A^{\otimes 2}}A_{{\rm PL}}(LM \times (LM\times _{M} LM) ))  \ar[ldd]_{H(\Delta^{!} \otimes 1)} \ar[dd]^-{(1\otimes \rm{EM}_{4})^{-1}}_-{\cong } \ar@{}[lddd]|{{\rm (I\hspace{-.1em}V')}}\\
H^{*}(LM^{\times 2}) \ar[d]_{H(1 \times {\rm Comp})} & \\
H^{*}(LM \times (LM\times _{M}LM)) \ar[d]_{\rm{EM}_{4}^{-1}}^{\cong } & H^{*}({\mathbb B} \otimes _{A^{\otimes 2}}((A\otimes {\mathbb B}) \otimes _{A^{\otimes 3}}A_{{\rm PL}}(LM^{\times 3})))  \ar[dl]_{H(\Delta^{!} \otimes (1 \otimes 1 ) \otimes 1)} \ar[dd]^{\Gamma '}\\
H^{*}((A\otimes {\mathbb B}) \otimes _{A^{\otimes 3}}A_{{\rm PL}}(LM^{\times 3}))  \ar[d]_{H((1\otimes  \Delta ^{!}) \otimes 1)} \ar@{}[rd]|(0.4){{\rm (V')}} & \\
H^{*}(LM^{\times 3}) & H^{*}(({\mathbb B}\otimes _{A} {\mathbb B}) \otimes _{A^{\otimes 3}}A_{{\rm PL}}(LM^{\times 3})).  \ar[l]_-{(-1)^{d}H((\Delta ^{!}\otimes \Delta ^{!} ) \otimes 1)} 
}
\end{split}
\end{equation}
Remark that the left $A^{\otimes 2}$-module structure of $(A\otimes {\mathbb B}) \otimes _{A^{\otimes 3}}A_{{\rm PL}}(LM^{\times 3})$ given by the natural left $A^{\otimes 2}$-module structure of $A\otimes {\mathbb B}$, that is, 
\[
(b_{1}\otimes b_{2}) \cdot ((a\otimes u)\otimes x) = (-1)^{|a||b_{2}|}(b_{1}a\otimes b_{2}u)\otimes x
\]
for $(a\otimes u) \otimes x \in (A\otimes {\mathbb B})\otimes _{A^{\otimes 3}} A_{{\rm PL}}(LM^{\times 3}) $ and $b_{1}\otimes b_{2} \in A^{\otimes 2}$. It is readily seen that the map ${\rm EM}_{1}'$ preserves the left $A^{\otimes 2}$-module structure. The chain map $\Gamma '$ is defined by $\Gamma '( v\otimes (a\otimes u)\otimes x)=(-1)^{|a||v|} (av\otimes u)\otimes x$ for any $v \otimes (a\otimes u)\otimes x $ in ${\mathbb B} \otimes _{A^{\otimes 2}}((A\otimes {\mathbb B}) \otimes _{A^{\otimes 3}}A_{{\rm PL}}(LM^{\times 3}))$.\\
\indent
We can check the commutativity of the diagram \eqref{dlp'} as with the proof of the commutativity of the diagram \eqref{dlp} described above. Indeed, we see that
\[
(1\otimes Dlp )Dlp = H((1\otimes \Delta ^{!}) \otimes 1){\rm EM}_{4}^{-1}H(1\times {\rm Comp} )H(\Delta ^{!}\otimes 1){\rm EM}^{-1}H({\rm Comp} ),
\]
and the square ${\rm (I')}$ commutes since $(1\times {\rm Comp} ){\rm Comp}$ is homotopic to ${\rm Comp} '$. The equality ${\rm (II')}:H(1\times {\rm Comp}){\rm EM} = {\rm EM}_{5}H(1\otimes (1\times {\rm Comp} )^{*})$ is shown by the naturality of the Eilenberg-Moore map and the following commutative diagram:
\[
\xymatrixrowsep{0.4cm}
\xymatrixcolsep{0.4cm}
\xymatrix{
& LM \times _{M} LM \times _{M} LM \ar[rr]^{ {\rm inc} } \ar[ld]_-{ev_{0}} \ar[dd]^(0.3){1\times {\rm Comp}  } & & LM\times ( LM \times _{M}LM ) \ar[ld]^{ev_{0}\times ev_{0}} \ar[dd]^{1\times {\rm Comp} }\\
M \ar[rr]^(0.3){\Delta} \ar[dd]^-{= }& & M^{\times 2} \ar[dd]^(0.3){=} \\
& LM\times _{M} LM \ar[rr]^(0.4){{\rm inc} } \ar[ld]_{ev_{0}} & & LM \times LM \ar[ld]^{ev_{0}^{\times 2}}\\
M\ar[rr]^{\Delta } & & M ^{\times 2}.
}
\]
We also see that the commutativity of the diagrams ${\rm (I\hspace{-.1em}I\hspace{-.1em}I')}$, ${\rm (I\hspace{-.1em}V')}$, ${\rm (V)}$ and the equality ${\rm EM}_{3}={\rm EM}_{5}({\rm EM}_{4}\otimes 1)(\Gamma ')^{-1}$
by a straightforward calculation. Therefore, conclude that
\[
 (-1)^{d}(1\otimes Dlp)Dlp =H( ( \Delta ^{!}\otimes \Delta ^{!} ) \otimes 1){\rm EM}_{3}^{-1}H({\rm Comp} ')= (Dlp \otimes 1)Dlp.
\]
\end{proof}


\section{Dual loop coproduct and associativity}

We begin recalling the definition of the dual loop coproduct on Gorenstein spaces. Consider the pull-back diagram
\[
\xymatrix{
LM\times _{M} LM \ar[r]^-{{\rm Comp}} \ar[d]_{ev_{0}} & LM \ar[d]^{  j }\\
M \ar[r]^-{\Delta} & M \times M,
}
\]
where the map $j$ is defined by $j(\gamma ) = (\gamma (0), \gamma (\frac{1}{2}))$ for any loop $\gamma $ on $M$. It gives rise to the Eilenberg-Moore map
\[
{\rm EM}' : {\mathbb B}\otimes _{A^{\otimes 2}}A_{{\rm PL}}(LM)\longrightarrow A_{{\rm PL}}(LM \times _{M} LM)
\]
and we have the composition map called the {\it dual loop coproduct}
\[
\xymatrixrowsep{0.6cm}
\xymatrix{
Dlcop : H^{*}(LM\times LM) \ar[r]^-{H({\rm inc})} & H^{*}(LM\times _{M}LM) \ar[d]^-{({\rm EM'})^{-1}}_-{\cong } & \\
 & H^{*}({\mathbb B} \otimes _{A^{\otimes 2}}A_{{\rm PL}}(LM) ) \ar[r]^-{H(\Delta ^{!}\otimes 1)} & H^{*}(LM).
}
\]

\begin{prop}\label{dlcp}
The dual loop coproduct satisfy the identity
\[
Dlcop (Dlcop \otimes 1) = (-1)^{d}Dlcop (1\otimes Dlcop).
\]
\end{prop}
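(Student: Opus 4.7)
The plan is to mirror the strategy of Proposition \ref{dlp} essentially verbatim on the coproduct side. First, introduce the triple concatenation ${\rm Comp}': LM\times_{M}LM\times_{M}LM \to LM$ (piecing three loops together on the intervals $[0,1/3], [1/3,2/3], [2/3,1]$) together with the endpoint map $j_{3}:LM\to M^{\times 3}$, $\gamma \mapsto (\gamma(0),\gamma(1/3),\gamma(2/3))$, which fits into a pull-back diagram
\[
\xymatrix{
LM\times_{M}LM\times_{M}LM \ar[r]^-{{\rm Comp}'} \ar[d]_-{ev_{0}} & LM \ar[d]^-{j_{3}} \\
M \ar[r]_-{\Delta_{3}} & M^{\times 3}
}
\]
where $\Delta_{3}=(\Delta\times 1)\Delta = (1\times\Delta)\Delta$. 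As in \S 3, this triple pull-back can be factored in two different ways: either through the intermediate pull-back over $\Delta\times 1$ (using $j\times 1:LM\times LM \to M^{\times 2}\times M$, up to homotopy), or through the intermediate pull-back over $1\times \Delta$. Each factorisation gives rise to Eilenberg--Moore maps ${\rm EM}'_{1},{\rm EM}'_{2},{\rm EM}'_{3}$ completely analogous to ${\rm EM}_{1},{\rm EM}_{2},{\rm EM}_{3}$ of the previous proof.

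Next, I would assemble two large commutative diagrams, one for each iterated dual loop coproduct, mirroring \eqref{dlp} and \eqref{dlp'}. For $Dlcop(Dlcop\otimes 1)$ the diagram starts with $H^{*}(LM^{\times 3})$, first passes through $H({\rm inc}\times 1)$, then through $({\rm EM}')^{-1}\otimes 1$ and $H(\Delta^{!}\otimes 1)\otimes 1$, and finally applies $Dlcop$ again; the outer perimeter will be rewritten as a composition that factors through $H^{*}({\rm Comp}')$ followed by the inverse of ${\rm EM}'_{3}$ and then $H((\Delta^{!}\otimes \Delta^{!})\otimes 1)$ on $({\mathbb B}\otimes_{A}{\mathbb B})\otimes_{A^{\otimes 3}}A_{{\rm PL}}(LM)$. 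Exactly the same reduction works for $(1\otimes Dlcop) Dlcop$ but with the roles of the two bar resolutions swapped. The commutativity of each small square is verified as in \S 3: the top square uses the homotopy between $(1\times{\rm Comp})\circ{\rm Comp}$ (resp.~$({\rm Comp}\times 1)\circ{\rm Comp}$) and ${\rm Comp}'$, the next square uses naturality of the Eilenberg--Moore map, and the remaining squares are routine algebraic checks.

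The two reductions end up identifying both sides of the identity with the same composite $H((\Delta^{!}\otimes \Delta^{!})\otimes 1)\circ ({\rm EM}'_{3})^{-1}\circ H({\rm Comp}')$ up to precisely the sign $(-1)^{d}$. This sign appears, as in the dual-product proof, through the chain isomorphism $\Gamma'$ comparing $({\mathbb B}\otimes_{A^{\otimes 2}})((A\otimes{\mathbb B})\otimes_{A^{\otimes 3}}A_{{\rm PL}}(LM))$ with $({\mathbb B}\otimes_{A}{\mathbb B})\otimes_{A^{\otimes 3}}A_{{\rm PL}}(LM)$: when the class $\Delta^{!}$ of degree $d$ is moved past the second bar-resolution factor, the Koszul sign rule produces exactly $(-1)^{d}$, which is where the discrepancy between the two bracketings is isolated.

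The main obstacle will be the correct bookkeeping of the pull-back squares and Eilenberg--Moore maps for the iterated coproduct: unlike the product case, where the pull-backs over $\Delta$ govern both ${\rm inc}$ and ${\rm Comp}$ separately, here the coproduct mixes the two maps (the concatenation ${\rm Comp}$ appears \emph{on top} of the pull-back while $j$, and later $j_{3}$, appears on the right). Correctly identifying the intermediate $A^{\otimes 2}$- and $A^{\otimes 3}$-module structures so that the analogues of the squares ${\rm (III)}$--${\rm (V)}$ actually commute, and so that the Koszul sign is produced exactly once in one of the two diagrams, is the only delicate point; the remaining verifications reduce to routine, if tedious, diagram chases of the same flavour as in the proof of Proposition \ref{dlp}.
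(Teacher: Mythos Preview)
Your overall strategy—reduce both iterated coproducts to a common composite of the form $H((\Delta^{!}\otimes\Delta^{!})\otimes 1)\circ({\rm EM}_{\rm triple}')^{-1}\circ H({\rm inc})$—is the right one and matches the paper. But the coproduct case is \emph{not} a verbatim replay of the product argument, and your sketch misses the extra ingredients the paper needs.

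The essential difficulty is that the map $j:LM\to M^{2}$, $\gamma\mapsto(\gamma(0),\gamma(1/2))$, forces asymmetric evaluation points when you iterate. Applying $Dlcop$ twice on the left produces, after one concatenation, the endpoint pattern $(\gamma(1/4),\gamma(0),\gamma(1/2))$; on the right you get $(\gamma(1/2),\gamma(0),\gamma(3/4))$. Neither agrees with your $j_{3}(\gamma)=(\gamma(0),\gamma(1/3),\gamma(2/3))$ on the nose. The paper handles this by inserting explicit loop-reparametrisations $\xi_{1},\xi_{2}:LM\to LM$ into the two diagrams, obtaining
\[
Dlcop(Dlcop\otimes 1)=(-1)^{d}H(\xi_{1})H((\Delta^{!}\otimes\Delta^{!})\otimes 1)({\rm EM}'_{4})^{-1}H({\rm inc}),\qquad
Dlcop(1\otimes Dlcop)=H(\xi_{2})H(\cdots),
\]
and then exhibiting an explicit homotopy $\xi_{1}\simeq\xi_{2}$. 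There is a second subtlety your plan omits: the intermediate endpoint map $j_{2}(\gamma_{1},\gamma_{2})=(\gamma_{1}(1/2),\gamma_{1}(0))$ lands in $M^{2}$ with coordinates \emph{swapped}, so to compare with the standard $j$ the paper introduces a switching $\tau$ on $A^{\otimes 2}$, a lift $\tilde{\tau}$ to ${\mathbb B}$, and shows $\Delta^{!}=\lambda\,\tau\Delta^{!}\tilde{\tau}$ with $\lambda^{2}=1$; this $\lambda$-trick is what makes the left-hand diagram close up. Finally, note that in the common composite the first map is $H({\rm inc})$, not $H({\rm Comp}')$: ${\rm Comp}'$ only appears as the top edge of the triple pull-back square defining ${\rm EM}'_{4}$, not as a map you apply. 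The chain isomorphisms replacing $\Gamma,\Gamma'$ here (the paper's $\Theta_{1},\Theta_{1}',\Theta_{2},\Theta_{2}'$) are also more elaborate, involving $\xi_{i}^{*}$ and evaluation maps, and it is in the square analogous to (V) that the $(-1)^{d}$ is isolated—on the $(Dlcop\otimes 1)$ side, not via a simple Koszul swap as you suggest.
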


\begin{proof}
We first consider the following pull-back diagram:
\begin{equation}\label{tj}
\xymatrix{
LM\times _{M} LM \ar[r]^-{{\rm Comp}} \ar[d]_{ev_{0}} & LM \ar[d]^{ T_{M}\circ  j }\\
M \ar[r]^-{\Delta} & M \times M,
}
\end{equation}
where $T_{M}:M^{\times 2}\to M^{\times 2}$ is the switching map. Let $\tilde{{\rm EM}}'$ be the Eilenberg-Moore map of the diagram \eqref{tj}, $\tau :A^{\otimes 2}\to A^{\otimes 2}$ the switching map and $\tilde{\tau } : {\mathbb B}\to {\mathbb B}$ a chain map satisfy $\tilde{\tau }^{2}=1$ and makes the diagram commutes:
\[
\xymatrixrowsep{0.5cm}
\xymatrixcolsep{1.5cm}
\xymatrix{
{\mathbb B} \ar[r]^-{\tilde{\tau }} & {\mathbb B} \\
A^{\otimes 2} \ar[u]^-{\iota } \ar[r]^-{\tau} & A^{\otimes 2}, \ar[u]_-{\iota}
}
\]
where $\iota $ is the inclusion. Both the map $\Delta^{!}$ and $\tau \circ  \Delta^{!} \circ \tilde{\tau }$ are in ${\rm Ext}_{A^{\otimes 2}}^{d}(A,A^{\otimes 2})\cong {\mathbb Q}$, there is a scalar $\lambda   \in {\mathbb Q}$ such that $\Delta^{!}=\lambda  \tau \circ  \Delta^{!} \circ \tilde{\tau }$ in ${\rm Ext}_{A^{\otimes 2}}^{d}(A,A^{\otimes 2})$. Hence, we obtain the following commutative diagram:
\[
\xymatrixrowsep{0.65cm}
\xymatrixcolsep{1.5cm}
\xymatrix{
H^{*}(LM\times LM) \ar[d]_-{H({\rm inc} )} \ar@/_30mm/[ddd]_{Dlcop}& \\
H^{*}(LM\times _{M}LM) \ar[d]_-{({\rm EM}')^{-1}}^-{\cong } \ar[rd]^-{(\tilde{{\rm EM}}')^{-1}}_-{\cong } & \\
H^{*}({\mathbb B}\otimes _{A^{\otimes 2}}A_{{\rm PL}}(LM)) \ar[r]^-{\tilde{\tau}\otimes _{\tau}1 } \ar[d]_-{H(\Delta^{!}\otimes 1)} & H^{*}({\mathbb B}\otimes _{A^{\otimes 2}}A_{{\rm PL}}(LM)) \ar[ld]^-{\lambda H(\Delta^{!}\otimes 1)}\\
H^{*}(LM\times LM).  & 
}
\]
Here, the commutativity of the upper right triangle is shown by the commutative diagram:
\[
\xymatrixrowsep{0.4cm}
\xymatrixcolsep{0.4cm}
\xymatrix{
& LM \times _{M} LM  \ar[rr]^{{\rm Comp} } \ar[ld]_-{ev_{0}} \ar[dd]^(0.3){=  } & & LM \ar[ld]^{j} \ar[dd]^{= }\\
M \ar[rr]^(0.3){\Delta} \ar[dd]^-{= }& & M^{\times 2} \ar[dd]^(0.3){T_{M}} \\
& LM\times _{M} LM \ar[rr]^(0.4){{\rm Comp} } \ar[ld]_{ev_{0}} & & LM  \ar[ld]^{T_{M}\circ j}\\
M\ar[rr]^{\Delta } & & M ^{\times 2}.
}
\]
Since
$\Delta ^{!} = \tau \tau \Delta^{!}  \tilde{\tau } \tilde{\tau}  = \lambda \tau \Delta^{!}  \tilde{\tau } =\lambda ^{2} \Delta^{!}$,
we see that $\lambda ^{2}=1$. Let ${\rm EM}_{1}'$ and $\tilde{{\rm EM}}_{1}'$ be the Eilenberg-Moore maps of the top and bottom square of the following diagram, respectively:
\begin{equation}\label{dlcop1}
\xymatrixrowsep{0.4cm}
\xymatrixcolsep{0.4cm}
\xymatrix{
& (LM \times _{M} LM) \times  LM \ar[rr]^{ {\rm Comp} \times 1 } \ar[ld]_-{ev_{0}\times ev_{0}} \ar[dd]^(0.3){=  } & & LM\times LM  \ar[ld]^{j \times ev_{0}} \ar[dd]^{ = }\\
M^{\times 2} \ar[rr]^(0.3){\Delta \times 1} \ar[dd]^-{= }& & M^{\times 3} \ar[dd]^(0.3){T_{M}\times 1} \\
& (LM \times _{M} LM) \times  LM \ar[rr]^{ {\rm Comp} \times 1 } \ar[ld]_-{ev_{0}\times ev_{0}}  & & LM\times LM  \ar[ld]^{T_{M}\circ j \times ev_{0}} \\
M^{\times 2} \ar[rr]^(0.3){\Delta \times 1} & & M^{\times 3}.
}
\end{equation}

Then, the definition of the dual loop coproduct and the equation $\lambda ^{2}=1$ show that
\begin{align*}
Dlcop (Dlcop \otimes 1) &= H(\Delta ^{!}\otimes 1)({\rm EM}')^{-1}H({\rm inc}) H ((\Delta ^{!} \otimes 1) \otimes 1)({\rm EM}_{1}')^{-1}H({\rm inc} \times 1)\\
                                &=\lambda ^{2}H(\Delta ^{!}\otimes 1)(\tilde{{\rm EM}}')^{-1}H({\rm inc}) H ((\Delta ^{!} \otimes 1) \otimes 1)(\tilde{{\rm EM}}_{1}')^{-1}H({\rm inc} \times 1)\\
                                &=H(\Delta ^{!}\otimes 1)(\tilde{{\rm EM}}')^{-1}H({\rm inc}) H ((\Delta ^{!} \otimes 1) \otimes 1)(\tilde{{\rm EM}}_{1}')^{-1}H({\rm inc} \times 1).
\end{align*}

Consider the commutative diagram
\begin{equation}\label{dlcop2}
\xymatrixrowsep{0.4cm}
\xymatrixcolsep{0.4cm}
\xymatrix{
& LM \times _{M} LM \times _{M} LM \ar[rrrr]^{{\rm Comp}'} \ar[ld]_{ev_{0}}  &&&& LM \ar[ld]_{j'}\\
M \ar[rrrr]^{\Delta}  &&&& M^{\times 3}&\\
& LM \times _{M} LM \times _{M} LM \ar[rr]^{{\rm Comp}\times 1} \ar[ld]^{ev_{0}} \ar[uu]^(0.3){=}& & LM \times _{M}LM \ar[rr]^(0.45){{\rm Comp}} \ar[ld]^{j_{2}} & & LM \ar[ld]^{j_{1}} \ar[uu]_{\xi _{1}}\\
M \ar[rr]_{\Delta} \ar[uu]^-{=}& & M \times M \ar[rr]_-{1\times \Delta } & & M^{\times 3},  \ar[uu]^(0.3){=}& 
}
\end{equation}
where the maps $j'$, $j_{1}$ and $j_{2}$ are defined by
\[
j' (\gamma ) = (\gamma \Bigl( \frac{1}{3} \Bigr), \gamma (0), \gamma \Bigl( \frac{2}{3} \Bigr) ), \ j_{1}=(\gamma \Bigl( \frac{1}{4} \Bigr), \gamma (0), \gamma \Bigl( \frac{1}{2} \Bigr) ), \ j_{2}(\gamma _{1},\gamma _{2})=(\gamma _{1} \Bigl( \frac{1}{2} \Bigr) , \gamma _{1}(0))
\]
for any $\gamma \in LM$ and $(\gamma _{1},\gamma _{2})\in LM\times _{M}LM$, respectively. The map $\xi _{1}$ is given by
\begin{eqnarray*}
\xi_{1} (\gamma )(t) = \left\{ \begin{array}{ll}
 \gamma (\frac{3}{4}t) & (0\leq t\leq \frac{2}{3}) \\
 \gamma (\frac{3}{2}t-\frac{1}{2}) & (\frac{2}{3}\leq t\leq 1)
 \end{array} \right.
 \end{eqnarray*}
for $\gamma \in LM$ and $t\in [0,1]$. Since the bottom two squares and the top square of the diagram \eqref{dlcop2} are pull-back diagrams, it give rise to the Eilenberg-Moore maps
\begin{align*}
&{\rm EM}'_{2}: {\mathbb B}\otimes _{A^{\otimes 2}}A_{{\rm PL}}(LM \times _{M}LM)\longrightarrow A_{{\rm PL}}(LM\times _{M}LM\times _{M}LM),\\
&{\rm EM}'_{3}: (A\otimes {\mathbb B})\otimes _{A^{\otimes 3}}A_{{\rm PL}}(LM)\longrightarrow A_{{\rm PL}}(LM\times _{M} LM),\\
&{\rm EM}'_{4}:({\mathbb B}\otimes _{A}{\mathbb B})\otimes _{A^{\otimes 3}}A_{{\rm PL}}(LM)\longrightarrow A_{{\rm PL}}(LM\times _{M}LM\times _{M}LM),
\end{align*}
respectively. We here note that there is a map between the bottom and left-hand side square of the diagram \eqref{dlcop2} and the diagram \eqref{dlcop1} such as the following:
\begin{equation}\label{dlcop3}
\xymatrixrowsep{0.45cm}
\xymatrixcolsep{0.7cm}
\xymatrix{
& LM \times _{M} LM \times _{M} LM \ar[rr]^-{ {\rm Comp} \times 1} \ar[ld]_{ev_{0}} \ar[dd]^(0.3){{\rm inc} } & & LM \times _{M}LM  \ar[ld]^{j_{2}} \ar[dd]^{{\rm inc}}\\
M \ar[rr]^(0.3){\Delta} \ar[dd]^{\Delta }& & M \times M  \ar[dd]^(0.7){1\times \Delta } \\
& (LM\times _{M} LM) \times LM \ar[rr]^(0.45){{\rm Comp} \times 1} \ar[ld]_{ev_{0}\times ev_{0}} & & LM \times LM \ar[ld]^{T_{M}\circ j\times ev_{0}}\\
M \times M \ar[rr]_{\Delta \times 1} & & M ^{\times 3},
}
\end{equation}
We hence get the following commutative diagram:
\begin{equation}\label{dlcop4}
\xymatrixrowsep{0.7cm}
\xymatrix{
H^{*}(LM\times LM \times LM) \ar[d]_{H({\rm inc} \times 1)}   \ar[rd]^{H({\rm inc})} \ar@{}[rddd]|(0.2){{\rm (I)}}& 
\\
H^{*}((LM\times _{M}LM)\times LM) \ar[r]^{H({\rm inc})}    \ar[d]_{(\tilde{\rm{EM}}'_{1})^{-1}}^{\cong }  \ar@{}[rd]|{{\rm (I\hspace{-.1em}I)}}  & H^{*}(LM\times _{M} LM \times _{M} LM)  \ar[d]^{({\rm EM}'_{2})^{-1}}_{\cong }\\
H^{*}(({\mathbb B}\otimes A)\otimes _{A^{\otimes 3}}A_{{\rm PL}}(LM\times LM))\ar[d]_{H((\Delta^{!} \otimes 1 ) \otimes 1)} \ar[r]^-{ \Delta ^{*} \otimes _{(1\times \Delta)^{*}} {\rm inc}^{*} } \ar@{}[rddd]|(0.2){{\rm (I\hspace{-.1em}I\hspace{-.1em}I)}}& H^{*}({\mathbb B}\otimes _{A^{\otimes 2}}A_{{\rm PL}}(LM\times _{M}LM))  \ar[ldd]_{H(\Delta^{!} \otimes 1)} \ar[ddd]^-{1\otimes ( \rm{EM}'_{3})^{-1}}_-{\cong } \\
H^{*}(LM\times LM) \ar[d]_{H({\rm inc})} & \ar@{}[ldd]|(0.5){{\rm (I\hspace{-.1em}V)}}\\
H^{*}(LM\times _{M}LM) \ar[d]_{(\tilde{\rm{EM}}')^{-1}}^{\cong}& \\
H^{*}({\mathbb B}\otimes _{A^{\otimes 2}}A_{{\rm PL}}(LM))  \ar[d]_{H(\Delta^{!} \otimes 1)} & H^{*}({\mathbb B} \otimes _{A^{\otimes 2}}((A\otimes {\mathbb B})\otimes _{A^{\otimes 3}}A_{{\rm PL}}(LM)))  \ar[l]_-{\Theta   _{1}'} \ar@{}[ld]_-{{\rm (V)}}\\
H^{*}(LM) & H^{*}(({\mathbb B}\otimes _{A}{\mathbb B})\otimes _{A^{\otimes 3}}A_{{\rm PL}}(LM))\ar[u]_{\Theta   _{1}} \ar[l]_-{(-1)^{d}H(\xi _{1})H((\Delta^{!} \otimes \Delta^{!}) \otimes 1)}
}
\end{equation}
Here, $\Delta ^{*}\otimes _{(1\times \Delta ) ^{*}}{\rm inc}^{*}$ is
given by
\[
( \Delta ^{*}\otimes _{(1\times \Delta ) ^{*}}{\rm inc}^{*})( (u\otimes a) \otimes x) = ua\otimes {\rm inc}^{*}(x) 
\]
for $ (u\otimes a)\otimes x \in ({\mathbb B}\otimes A) \otimes _{A^{\otimes 3}}A_{{\rm PL}}(LM\times LM)$. The chain maps $\Theta _{1}$ and $\Theta _{1}'$ are defined as follows:
\begin{align*}
& \Theta _{1}((u\otimes v)\otimes x) = u \otimes (1\otimes v) \otimes \xi _{1}^{*}(x) ,\\
& \Theta _{1}'(v \otimes (a\otimes u) \otimes x ) = \sum (-1)^{|v_{2}||a|+(|v_{1}|+|a|)(|v_{2}|+|u|)}  v_{2}u\otimes \Bigl( ev_{\frac{1}{4}}^{*}(v_{1}a)\cdot x \Bigr),
\end{align*}
where $\Delta ^{!}(v)=\sum v_{1}\otimes v_{2}$. We also note that the left $A^{\otimes 2}$-module structure of $(A\otimes {\mathbb B})\otimes _{A^{\otimes 3}}A_{{\rm PL}}(LM)$ given by the natural left $A^{\otimes 2}$-module structure of $A\otimes {\mathbb B}$ makes ${\rm EM}_{3}'$ a left $A^{\otimes 2}$-module map.\\
\indent
We now check the commutativity of the diagram \eqref{dlcop4}. A commutativity of ${\rm (I)}$ is trivial, and ${\rm (I\hspace{-.1em}I)} $ is shown by the naturality of the Eilenberg-Moore map and the commutative diagram \eqref{dlcop3}. It is readily seen that the diagram ${\rm (I\hspace{-.1em}I\hspace{-.1em}I)}$, ${\rm (I\hspace{-.1em}V)}$ and ${\rm (V)}$.
By the diagram \eqref{dlcop2}, we see that ${\rm EM}'_{4}={\rm EM}'_{2}(1\otimes {\rm EM}'_{3})\Theta _{1}$. Therefore,
\[
Dlcop (Dlcop \otimes 1)= (-1)^{d}H(\xi _{1})H( ( \Delta ^{!} \otimes \Delta ^{!} ) \otimes 1)({\rm EM'}_{4})^{-1}H({\rm inc}).
\]
On the other hand, consider the two commutative diagram
\begin{equation}\label{dlcop'1}
\xymatrixrowsep{0.6cm}
\xymatrixcolsep{0.7cm}
\xymatrix{
& LM \times _{M} LM \times _{M} LM \ar[rr]^{1\times  {\rm Comp}} \ar[ld]_{ev_{0}} \ar[dd]^(0.3){{\rm inc} } & & LM \times _{M}LM  \ar[ld]^{j'_{2}} \ar[dd]^{{\rm inc}}\\
M \ar[rr]^(0.3){\Delta} \ar[dd]^{\Delta }& & M \times M  \ar[dd]^(0.3){\Delta \times 1} \\
& LM \times (LM\times _{M} LM) \ar[rr]^(0.45){1\times {\rm Comp}} \ar[ld]_{ev_{0}\times ev_{0}} & & LM \times LM \ar[ld]^{ ev_{0} \times j}\\
M \times M \ar[rr]_{1\times \Delta } & & M ^{\times 3}
}
\end{equation}
and
\begin{equation}\label{dlcop'2}
\xymatrixrowsep{0.6cm}
\xymatrixcolsep{0.2cm}
\xymatrix{
& LM \times _{M} LM \times _{M} LM \ar[rrrr]^{{\rm Comp}'} \ar[ld]^{ev_{0}}  &&&& LM \ar[ld]^{j'} \\
M \ar[rrrr]^{\Delta}  &&&& M^{\times 3} &\\
& LM \times _{M} LM \times _{M} LM \ar[rr]^-{1\times {\rm Comp}} \ar[ld]^{ev_{0}} \ar[uu]^(0.3){=}& & LM \times _{M}LM \ar[rr]^{{\rm Comp}} \ar[ld]^{j_{2}'} & & LM \ar[ld]^{j'_{1}} \ar[uu]_{\xi _{2}}\\
M \ar[rr]_{\Delta}\ar[uu]^-{=} & & M \times M \ar[rr]_-{\Delta \times 1} & & M^{\times 3}, \ar[uu]^(0.3){=}& 
}
\end{equation}
where $j_{1}'$ and $j_{2}'$ are defined by $j_{1}'(\gamma ) = (\gamma ( \frac{1}{2}) , \gamma (0),  \gamma ( \frac{3}{4} ) )$, $j_{2}'(\gamma _{1},\gamma _{2})= (\gamma _{2}(0), \gamma _{2}( \frac{1}{2}) )$
for $\gamma \in LM$ and $(\gamma _{1},\gamma _{2})\in LM \times _{M}LM$, respectively. The map $\xi _{2}$ is given by
\begin{eqnarray*}
\xi_{2}(\gamma )(t) =
\left\{
\begin{array}{ll}
\gamma (\frac{3}{2}t) & (0\leq t\leq \frac{1}{3}) \\
\gamma (\frac{3}{4}t+\frac{1}{4}) & (\frac{1}{3}\leq t\leq 1).
\end{array}
\right.
\end{eqnarray*}
We denote by
\begin{align*}
&{\rm EM}'_{5}: (A\otimes {\mathbb B})\otimes _{A^{\otimes 3}}A_{{\rm PL}}(LM \times LM)\longrightarrow A_{{\rm PL}}(LM \times (LM\times _{M}LM)),\\
&{\rm EM}'_{6}: {\mathbb B}\otimes _{A^{\otimes 2}}A_{{\rm PL}}(LM \times_{M} LM)\longrightarrow A_{{\rm PL}}(LM \times_{M} LM \times _{M} LM)\\
&{\rm EM}'_{7}: ({\mathbb B}\otimes A)\otimes _{A^{\otimes 3}}A_{{\rm PL}}(LM )\longrightarrow A_{{\rm PL}}(LM \times_{M} LM)
\end{align*}
the Eilenberg-Moore maps of the bottom square of the diagram \eqref{dlcop'1}, the top square of \eqref{dlcop'1} and the bottom and right-hand side square of \eqref{dlcop'2}, respectively. Then, we have the commutative diagram:
\begin{equation}\label{dlcop'3}
\xymatrixrowsep{0.7cm}
\xymatrix{
H^{*}(LM\times LM \times LM) \ar[d]_{H(1\times {\rm inc} )}  \ar[rd]^{H({\rm inc})} \ar@{}[rddd]|(0.2){{\rm (I')}}&
\\
H^{*}(LM\times (LM\times _{M}LM)) \ar[r]^{H({\rm inc})}  \ar[d]_{( \rm{EM}'_{5})^{-1}}^{\cong } \ar@{}[rd]|{{\rm (I\hspace{-.1em}I')}} & H^{*}(LM\times _{M} LM \times _{M} LM) \ar[d]^{( \rm{EM}'_{6})^{-1} }_{\cong }\\
H^{*}((A\otimes {\mathbb B})\otimes _{A^{\otimes 3}}A_{{\rm PL}}(LM\times LM))  \ar[d]_{H(( 1\otimes \Delta^{!}) \otimes 1)} \ar[r]^{\Delta^{*} \otimes _{(\Delta \times 1)^{*}} {\rm inc}^{*}} \ar@{}[rddd]|(0.2){{\rm (I\hspace{-.1em}I\hspace{-.1em}I')}}& H^{*}({\mathbb B}\otimes _{A^{\otimes 2}}A_{{\rm PL}}(LM\times _{M}LM))  \ar[ldd]_{H(\Delta^{!} \otimes 1)}  \ar[ddd]^-{1\otimes  (\rm{EM}'_{7})^{-1}}_-{\cong}\\
H^{*}(LM\times LM) \ar[d]_{H({\rm inc})} & \ar@{}[ldd]|(0.5){{\rm (I\hspace{-.1em}V')}}\\
H^{*}(LM\times _{M}LM) \ar[d]_{( \rm{EM}')^{-1}}^{\cong } & \\
H^{*}({\mathbb B}\otimes _{A^{\otimes 2}}A_{{\rm PL}}(LM))  \ar[d]_{H(\Delta^{!} \otimes 1)} & H^{*}({\mathbb B}\otimes _{A^{\otimes 2}}(({\mathbb B}\otimes A)\otimes _{A^{\otimes 3}}A_{{\rm PL}}(LM))) \ar[l]_-{\Theta _{2}'} \ar@{}[ld]_-{{\rm (V')}} \\
H^{*}(LM) & H^{*}(({\mathbb B}\otimes _{A}{\mathbb B})\otimes _{A^{\otimes 3}}A_{{\rm PL}}(LM)).  \ar[u]_{\Theta _{2}} \ar[l]_-{H(\xi_{2})H( ( \Delta^{!} \otimes \Delta^{!}) \otimes 1)}
}
\end{equation}
Here, $\Delta^{*}\otimes _{(\Delta \times 1)^{*}}{\rm inc}^{*}$ is the map induced by the map between the pull-back diagrams described in \eqref{dlcop'2}, that is,
\[
(\Delta^{*}\otimes _{(\Delta \times 1)^{*}}{\rm inc}^{*} ) ( (a\otimes u)\otimes x) =  au \otimes {\rm inc}^{*}(x)
\]
for $ (a\otimes u) \otimes x\in (A\otimes {\mathbb B})\otimes _{A^{\otimes 3}}A_{{\rm PL}}(LM\times LM)$. The chain maps $\Theta _{2}$ and $\Theta _{2}'$ are given by
\begin{align*}
&\Theta _{2}( (u\otimes v) \otimes x )= (-1)^{|u||v|} v\otimes (u\otimes 1)\otimes \xi _{2}^{*}(x),\\
&\Theta '_{2}(v\otimes (u\otimes a )\otimes x) = \sum (-1)^{|u||v_{1}|+(|u|+|a|)|v_{2}|} uv_{1}\otimes \Bigl( ev^{*}_{\frac{3}{4}}\rho (av_{2})\cdot x \Bigr),
\end{align*}
where $\Delta ^{!}(v)= \sum v_{1}\otimes v_{2}$. By the definition of the dual loop coproduct, we see that
\[
Dlcop (1\otimes Dlcop ) = H( \Delta ^{!} \otimes 1)({\rm EM}')^{-1}H({\rm inc}) H ((1\otimes \Delta ^{!})\otimes 1)({\rm EM}_{5}')^{-1}H(1\times {\rm inc} )
\]
Similar argument of the proof of the commutativity of the diagram \eqref{dlcop4} described above shows the commutativity of the diagram \eqref{dlcop'3}. We also see that the equation ${\rm EM}'_{4}=\Theta _{2}(1\otimes {\rm EM}'_{7}){\rm EM}'_{6}$ holds,
therefore the commutativity of the diagram \eqref{dlcop'3} shows
\[
Dlcop (1\otimes Dlcop) = H(\xi _{2} )H(( \Delta ^{!} \otimes \Delta ^{!})\otimes 1){\rm EM'}_{4}^{-1}H({\rm inc}).
\]
Since the map $H : LM \times [0,1] \to LM $ defined by
\begin{eqnarray*}
H(\gamma ,s)(t) = \left\{ \begin{array}{ll}
 \gamma (\frac{3}{4}(1+s)t) & (0\leq t\leq \frac{1}{3}) \\
 \gamma (\frac{3}{4}t+\frac{1}{4}s) & (\frac{1}{3} \leq t \leq \frac{2}{3})\\
 \gamma ((\frac{3}{2}-\frac{3}{4}s)t-\frac{1}{2}+\frac{3}{4}s) & (\frac{2}{3}\leq t\leq 1),
 \end{array} \right.
 \end{eqnarray*}
for $\gamma \in LM$ and $s,t \in [0,1]$ is a homotopy from $\xi_{1}$ to $\xi_{2}$, we have
\[
Dlcop (Dlcop \otimes 1) = (-1)^{d}Dlcop (1\otimes Dlcop).
\]
This completes the proof.
\end{proof}


\section{Frobenius compatibility}
In this section, we prove that the dual loop product and the dual loop coproduct satisfy the Frobenius compatibility.
\begin{prop}\label{Fro}
One has
\[
(-1)^{d} (1\otimes Dlcop ) (Dlp \otimes 1)= Dlp \circ  Dlcop = (-1)^{d} ( Dlcop \otimes 1) (1\otimes Dlp ) . \]
\end{prop}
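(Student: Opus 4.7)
\emph{Proof plan.} The strategy will mirror the proofs of Propositions~\ref{dlp} and~\ref{dlcp}: for each of the three composites in the Frobenius identity, I plan to construct a large commutative diagram whose top row consists of geometric maps on loop spaces (concatenations and inclusions of fiber products), whose middle rows use Eilenberg--Moore isomorphisms for suitable iterated pull-backs, and whose bottom row expresses the composite as a single application of $H((\Delta^{!}\otimes\Delta^{!})\otimes 1)$ to an iterated bar--tensor resolution of the form $(\mathbb{B}\otimes_{A}\mathbb{B})\otimes_{A^{\otimes 3}}A_{\mathrm{PL}}(LM^{\times 2})$. The three resulting expressions will coincide up to a sign of $(-1)^{d}$, yielding the claim.

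Concretely, the pull-backs will mix the two-fold diagonal $\Delta\colon M\to M^{\times 2}$ used in $Dlp$ with the pinch map $j\colon LM\to M^{\times 2}$, $\gamma\mapsto(\gamma(0),\gamma(1/2))$, used in $Dlcop$. For $Dlp\circ Dlcop$ I will stack the $Dlcop$-pull-back on top of the $Dlp$-pull-back, obtaining a pull-back of $LM\times LM$ over $M^{\times 3}$; this presents the triple fiber product $LM\times_{M}LM\times_{M}LM$ (or a homotopy-equivalent variant) as an iterated limit. For $(1\otimes Dlcop)(Dlp\otimes 1)$ I will extend the $Dlp$-pull-back by $\Delta\times 1\colon M^{\times 2}\to M^{\times 3}$ and attach a $j$-type pull-back on the last factor; for $(Dlcop\otimes 1)(1\otimes Dlp)$ I will use $1\times\Delta$ with a $j$-type pull-back on the first factor. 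Each composite then admits a factorization through this common triple fiber product by naturality of the Eilenberg--Moore map, up to a chain-map reordering of the type $\Gamma,\Gamma',\Theta_{i},\Theta_{i}'$ exhibited in the earlier proofs, and the three geometric reparameterizations attached to the tops of these diagrams will be mutually homotopic through explicit parameter changes on the ``theta'' arrangement of three loops.

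The sign $(-1)^{d}$ will arise exactly as in Proposition~\ref{dlcp}: the three factorizations differ either by transposing the two copies of $\Delta^{!}$ via the switching map $\tau$, contributing the scalar $\lambda$ with $\lambda^{2}=1$, or by a Koszul-signed reordering in which a degree-$d$ class is moved past another degree-$d$ class in a $\mathbb{B}\otimes_{A}\mathbb{B}$ (or $\mathbb{B}\otimes A$, $A\otimes\mathbb{B}$) resolution. The main obstacle will be that, unlike in the associativity arguments, Frobenius mixes a product constraint ($ev_{0}\times ev_{0}$) with a coproduct constraint ($j$), which imposes different $A^{\otimes 2}$-module structures on the intermediate bar resolutions; reconciling these so that the comparison of the three composites reduces to a single chain-level switching---and hence to the sign analysis of Proposition~\ref{dlcp}---is the crucial delicate step. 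Once the iterated pull-backs are chosen compatibly and the switching identity is invoked, the identity will follow.
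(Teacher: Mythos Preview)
Your architectural plan---build big commutative diagrams out of iterated pull-backs, EM isomorphisms, and explicit chain-level comparison maps---matches the paper's philosophy. But the concrete mechanism you propose diverges from what the paper actually does, and the divergence matters.

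You propose to reduce all three composites to a single common expression $H((\Delta^{!}\otimes\Delta^{!})\otimes 1)$ on $(\mathbb{B}\otimes_{A}\mathbb{B})\otimes_{A^{\otimes 3}}A_{\mathrm{PL}}(LM^{\times 2})$, then compare via the switching scalar $\lambda$ from Proposition~\ref{dlcp}. The paper does \emph{not} do this. It proves the two equalities separately, and in neither case does a common $(\Delta^{!}\otimes\Delta^{!})$-form appear; the $\lambda$-argument is never invoked. Instead, the new geometric ingredient---absent from your plan and from the associativity proofs---is a \emph{cyclic permutation} $T_{(123)}$ (resp.\ $T_{(321)}$) of the triple fiber product $LM\times_{M}LM\times_{M}LM$, combined with explicit \emph{loop rotations} $\xi,\xi',R_{1/2}\colon LM\to LM$ that are homotopic to the identity. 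These make cubes like
\[
\xymatrix@R=1em@C=2em{
LM^{\times_{M}3}\ar[r]^-{\mathrm{Comp}\times 1}\ar[d]_{(\mathrm{Comp}\times 1)T_{(123)}} & LM\times_{M}LM\ar[d]^{\xi\circ\mathrm{Comp}}\\
LM\times_{M}LM\ar[r]^-{\mathrm{Comp}} & LM
}
\]
commute on the nose, which is what lets the $Dlcop$-side (built from $j$) line up with the $Dlp$-side (built from $ev_{0}\times ev_{0}$). The comparison between the two halves of each equality is then effected by chain isomorphisms $\Phi,\Phi'$ (resp.\ $\Psi,\Psi'$) between $\mathbb{B}\otimes_{A^{\otimes 2}}((\mathbb{B}\otimes A)\otimes_{A^{\otimes 3}}A_{\mathrm{PL}}(LM^{\times 2}))$ and $\mathbb{B}\otimes_{A^{\otimes 2}}((A\otimes\mathbb{B})\otimes_{A^{\otimes 3}}A_{\mathrm{PL}}(LM^{\times 2}))$, and the sign $(-1)^{d}$ falls out of a direct computation with $\Phi'$ (resp.\ $\Psi'$), not from a $\tau$-switching.

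So your ``main obstacle'' diagnosis is accurate, but your proposed resolution (reduce to a symmetric common form and invoke $\lambda^{2}=1$) is not the paper's, and I am not confident it goes through as stated: the two $A^{\otimes 2}$-module structures you worry about are reconciled in the paper precisely by the cyclic shift and the rotation, not by symmetrizing the bar resolution. If you want to follow the paper, introduce $T_{(123)}$, $\xi$, and the maps $\Phi,\Phi'$ explicitly; if you want to pursue your symmetric-form idea, you will need to explain concretely how $Dlp\circ Dlcop$ factors through $(\mathbb{B}\otimes_{A}\mathbb{B})\otimes_{A^{\otimes 3}}A_{\mathrm{PL}}(LM^{\times 2})$, which is not obvious given that the two shriek maps in that composite sit on opposite sides of a $\mathrm{Comp}^{*}$ and an $\mathrm{inc}^{*}$.
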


\begin{proof}
We note that the notations ${\rm EM}_{i}$ and ${\rm EM'}_{i}$ are described in \S 2 and \S 3, respectively. Consider the following three commutative diagrams:
\begin{equation}\label{fro1}
\xymatrixrowsep{0.4cm}
\xymatrixcolsep{0.9cm}
\xymatrix{
& LM \times _{M} LM \times _{M} LM \ar[rr]^-{{\rm Comp} \times 1} \ar[dd]^(0.3){({\rm Comp} \times 1)T_{(123)}} \ar[ld]_{ev_{0}}  & & LM \times _{M} LM \ar[dd]^{\xi {\rm Comp}}  \ar[ld]^{(ev_{1,0},ev_{1,\frac{1}{2}})} \\
M \ar[dd]^{=} \ar[rr]^(0.3){\Delta} & & M^{\times 2} \ar[dd]^(0.3){=} & \\
& LM \times _{M} LM \ar[ld]_{ev_{0}}  \ar[rr]^(0.45){{\rm Comp} } & & LM \ar[ld]^{j} \\
M \ar[rr]^{\Delta }& & M^{\times 2} &
}
\end{equation}

\begin{equation}\label{fro2}
\xymatrixrowsep{0.4cm}
\xymatrixcolsep{0.9cm}
\xymatrix{
& LM \times _{M} LM \times _{M} LM \ar[rr]^{{\rm inc}} \ar[dd]^(0.3){{\rm inc}} \ar[ld]_{ev_{0}}  & & LM \times (LM \times _{M} LM )\ar[dd]^{{\rm inc}}  \ar[ld]_{ev_{0}\times ev_{0}} \\
M \ar[dd]^-{\Delta } \ar[rr]^(0.3){\Delta} & & M^{\times 2} \ar[dd]^(0.3){1\times \Delta } & \\
& ( LM \times _{M} LM) \times LM \ar[ld]^{ev_{0}\times ev_{0}}  \ar[rr]^(0.3){{\rm inc} } & & LM^{\times 3}\ar[ld]^{ev_{0}^{\times 3}} \\
M \times M \ar[rr]^{\Delta \times 1}& & M^{\times 3} &
}
\end{equation}
\begin{equation}\label{fro3}
\xymatrixrowsep{0.4cm}
\xymatrixcolsep{0.3cm}
\xymatrix{
& LM \times _{M} LM \times _{M} LM \ar[rr]^-{{\rm inc}} \ar[ld]_{ev_{0}} \ar[dd]^(0.3){T_{(321)}} & & LM \times (LM\times _{M} LM) \ar[rr]^-{1\times {\rm Comp}} \ar[ld]_{ev_{0}\times ev_{0}} & & LM^{\times 2}  \ar[ld]_{ev_{0}\times j} \ar[dd]^{=} \\
M \ar[rr]^(0.3){\Delta } \ar[dd]^{=} & & M ^{\times 2} \ar[rr]^{1\times \Delta } & & M^{\times 3} \ar[dd]^(0.3){=} &\\
& LM \times _{M} LM \times _{M} LM \ar[rr]^{{\rm Comp} \times 1}  \ar[ld]^{ev_{0}} & & LM \times _{M} LM \ar[rr]^{({\rm inc} ) \circ  T} \ar[ld]_(0.7){(ev_{1,0},ev_{1,\frac{1}{2}})}  & & LM^{\times 2} \ar[ld]^{ev_{0}\times j} \\
M \ar[rr]^{\Delta } & & M ^{\times 2} \ar[rr]^{\Delta \times 1} & & M^{\times 3} &
}
\end{equation}
Here $T:LM\times _{M} LM\to LM\times _{M}LM $ is the switching map and  $\xi : LM\to LM$ is a map given by
\begin{eqnarray*}
\xi (\gamma )(t) =
\left\{
\begin{array}{ll}
\gamma (2t + \frac{1}{2}) & (0\leq t\leq \frac{1}{4}) \\
\gamma (t- \frac{1}{4}) & (\frac{1}{4}\leq t\leq \frac{1}{2})\\
\gamma (\frac{1}{2}t) & (\frac{1}{2}\leq t\leq 1)
\end{array}
\right.
\end{eqnarray*}
for $\gamma \in LM$ and $t\in [0,1]$, and  the maps $T_{(123)}$ and $T_{(321)}$ are given by
\[
T_{(123)}(\gamma _{1},\gamma _{2},\gamma _{3})= (\gamma _{3},\gamma _{1},\gamma _{2}), \ T_{(321)}(\gamma _{1},\gamma _{2},\gamma _{3})= (\gamma _{2},\gamma _{3},\gamma _{1})
\]
for $(\gamma _{1},\gamma _{2},\gamma _{3})$ in $LM \times_{M} LM \times _{M} LM $.
Denote by
\begin{align*}
& {\rm EM}_{1}'' : {\mathbb B}\otimes _{A^{\otimes 2}} A_{{\rm PL}}(LM \times _{M} LM ) \longrightarrow A_{{\rm PL}}(LM \times _{M} LM \times_{M} LM)\\
& {\rm EM}_{2}'' : ( {\mathbb B} \otimes A) \otimes _{A^{\otimes 3}} A_{{\rm PL}}(LM ^{\times 2} ) \longrightarrow A_{{\rm PL}}(LM \times _{M} LM )
\end{align*}
the Eilenberg-Moore maps of the top square of \eqref{fro1} and the bottom and right-hand side square of \eqref{fro3}, respectively.
In order to obtain the result, we consider the following diagrams:

\begin{landscape}
\begin{equation}\label{fro4}
\xymatrixrowsep{0.8cm}
\xymatrixcolsep{0.5cm}
\xymatrix{
H^{*}(LM \times LM) \ar[r]_-{H({\rm Comp} \times 1)} \ar[dd]^{H({\rm inc})} 
 \ar@/^6mm/[rrr]^{(Dlp\otimes 1)} \ar@{}[rdd]|{{\rm (I)}} & H^{*}((LM ^{\times _{M}2})\times LM) \ar[dd]^{H({\rm inc})} \ar@{}[rdd]|{{\rm (I\hspace{-.1em}I)}}  \ar[r]_-{({\rm EM}_{1})^{-1}}^{\cong }& H^{*}(({\mathbb B}\otimes A)\otimes _{A^{\otimes 3}}A_{{\rm PL}}(LM^{\times 3})) \ar[dd]^{\Delta ^{*}\otimes _{(1\times \Delta )^{*}}{\rm inc}^{*}} \ar[r]_-{H( (\Delta ^{!}\otimes 1)\otimes 1)} \ar@{}[rdd]|{{\rm (I\hspace{-.1em}I\hspace{-.1em}I)}}& H^{*}(LM^{\times 3}) \ar[dd]_{H(1\times {\rm inc})}
\\
&&&\\
H^{*}(LM \times _{M} LM) \ar[r]^-{H({\rm Comp} \times _{M}1)}  \ar@{}[rdddd]|{{\rm (I\hspace{-.1em}V)}} \ar[dddd]^{({\rm EM'})^{-1}}_{\cong } & H^{*}(LM ^{\times _{M}3}) \ar@{}[rdddddd]|(0.35){{\rm (V)}} \ar[r]^-{({\rm EM}_{5})^{-1}}_-{\cong } & H^{*}({\mathbb B}\otimes _{A^{\otimes 2}}A_{{\rm PL}}(LM \times (LM ^{\times _{M}2})))  \ar[r]^-{H(\Delta ^{!} \otimes 1)} \ar@{}[rdd]|{{\rm (V\hspace{-.1em}I)}} \ar[dd]_{(1\otimes {\rm EM'}_{5})^{-1}}^{\cong } & H^{*}(LM \times (LM ^{\times _{M}2})) \ar[dd]_{({\rm EM}'_{5})^{-1}}^{\cong } \\
&&&\\
 & H^{*}(LM ^{\times _{M}3}) \ar[uu]^-{\cong }_-{H(T_{(321)})=H(T_{(123)})^{-1}} \ar[dd]^{({\rm EM}''_{1})^{-1}}_{\cong } & H^{*}({\mathbb B}\otimes _{A^{\otimes 2}}((A\otimes {\mathbb B})\otimes _{A^{\otimes 3}}A_{{\rm PL}}(LM ^{\times 2})))  \ar[r]^-{H(\Delta ^{!}\otimes (1\otimes 1)\otimes 1)}  \ar@{}[rdddd]^{{\rm (I\hspace{-.1em}X)}} & H^{*}((A\otimes {\mathbb B})\otimes _{A^{\otimes 3}}A_{{\rm PL}}(LM ^{\times 2}))  \ar[dddd]_{H( (1\otimes \Delta ^{!})\otimes 1)} \\
 &&&\\
H^{*}({\mathbb B}\otimes _{A^{\otimes 2}}A_{{\rm PL}}(LM))  \ar[r]^{1\otimes (\xi {\rm Comp} )^{*} } \ar[dd]^{H(\Delta ^{!} \otimes 1)}  \ar@{}[rdd]|{{\rm (V\hspace{-.1em}I\hspace{-.1em}I)}}  & H^{*}({\mathbb B}\otimes _{A^{\otimes 2}}A_{{\rm PL}}(LM ^{\times _{M}2}))  \ar[dd]^{H(T) H(\Delta ^{!}\otimes 1)}  \ar@{}[rdd]|{{\rm (V\hspace{-.1em}I\hspace{-.1em}I\hspace{-.1em}I)}} \ar[r]^-{(1\otimes {\rm EM}''_{2})^{-1}}_-{\cong } & H^{*}({\mathbb B} \otimes _{A^{\otimes 2}}(({\mathbb B}\otimes A) \otimes _{A^{\otimes 3}}A_{{\rm PL}}(LM^{\times 2})))   \ar[uu]^{\cong }_{\Phi  }  \ar[dd]^{\Phi '}  & \\
&&&\\
H^{*}(LM) \ar[r]^{H({\rm Comp})} \ar@/_6mm/[rrr]^{Dlp} & H^{*}(LM ^{\times _{M}2})  \ar[r]^{({\rm EM})^{-1}}_{\cong } & H^{*}({\mathbb B}\otimes _{A^{\otimes 2}}A_{{\rm PL}}(LM^{\times 2})) \ar[r]^{(-1)^{d}H(\Delta ^{!}\otimes 1)} & H^{*}(LM\times LM).
}
\end{equation}
\end{landscape}
Remark that the left $A^{\otimes 2}$-module structure of $(A\otimes {\mathbb B})\otimes _{A^{\otimes 3}}A_{{\rm PL}}(LM^{\times 2})$ given by the natural left $A^{\otimes 2}$-module structure of $A\otimes {\mathbb B}$ makes ${\rm EM}_{5}$ a left $A^{\otimes 2}$-module map. Similarly, the left $A^{\otimes 2}$-module structure of $( {\mathbb B}\otimes A)\otimes _{A^{\otimes 3}}A_{{\rm PL}}(LM^{\times 2})$ given by the natural right $A^{\otimes 2}$-module structure of ${\mathbb B}\otimes A$ makes ${\rm EM}_{2}''$ a left $A^{\otimes 2}$-module map.
The chain map $\Phi  $ is defined by
\[
\Phi  (v \otimes (u \otimes a) \otimes x) = (-1)^{|u||v|} u \otimes (1 \otimes va ) \otimes x
\]
for $v \otimes (u \otimes a) \otimes x \in {\mathbb B} \otimes _{A^{\otimes 2}}(({\mathbb B}\otimes A) \otimes _{A^{\otimes 3}}A_{{\rm PL}}(LM^{\times 2}))$.
We also see that $\Phi  $ is an isomorphism with the inverse map
\[
\Phi  ^{-1}(v\otimes (a\otimes u)\otimes x) = (-1)^{|u|(|a|+|v|)+|a||v|}u\otimes (av\otimes 1) \otimes x
\]
for $v\otimes (a\otimes u)\otimes x$ in ${\mathbb B} \otimes _{A^{\otimes 2}}((A\otimes {\mathbb B}) \otimes _{A^{\otimes 3}}A_{{\rm PL}}(LM^{\times 2}))$. The chain map $\Phi '$ is also given by
\[
\Phi ' (v\otimes (u\otimes a)\otimes x)=\sum (-1)^{|u||v_{1}|+|u||v_{2}|+|a||v_{2}|}uv_{1}\otimes \Bigl( ev^{*}_{2,\frac{1}{2}}\rho (av_{2})\cdot x \Bigr)
\]
for $v\otimes (u\otimes a)\otimes x \in {\mathbb B} \otimes _{A^{\otimes 2}}(({\mathbb B}\otimes A) \otimes _{A^{\otimes 3}}A_{{\rm PL}}(LM^{\times 2}))$, where $\Delta ^{!}(v)=\sum v_{1}\otimes v_{2}$. We here show the commutativity of the diagram \eqref{fro4}. The commutativity of the square (I) is trivial. The naturality of the Eilenberg-Moore map and the commutative diagrams \eqref{fro1}, \eqref{fro2} and \eqref{fro3} show that the diagrams (I\hspace{-.1em}V), (I\hspace{-.1em}I) and (V) commute. The straightforward calculation show the commutativity of (I\hspace{-.1em}I\hspace{-.1em}I), (V\hspace{-.1em}I), (V\hspace{-.1em}I\hspace{-.1em}I\hspace{-.1em}I) and (I\hspace{-.1em}X).
Since $\xi$ is homotopic to the identity map with the homotopy $H' : LM \times I \to LM $ given by
\begin{eqnarray*}
H' (\gamma ,s)(t) =
\left\{
\begin{array}{ll}
\gamma (\frac{2}{2-s}t + \frac{4-3s}{4-2s}) & (0\leq t\leq \frac{1}{4}s) \\
\gamma (t-\frac{1}{4}s) & (\frac{1}{4}s \leq t\leq \frac{1}{4}+\frac{1}{4}s)\\
\gamma (\frac{1}{1+s}t ) & (\frac{1}{4}+\frac{1}{4}s \leq t\leq \frac{1}{2} + \frac{1}{2}s)\\
\gamma (\frac{2}{2-s}t - \frac{3s}{4-2s}) & (\frac{1}{2} + \frac{1}{2}s \leq t\leq 1),
\end{array}
\right.
\end{eqnarray*}
the following shows the commutativity of  (V\hspace{-.1em}I\hspace{-.1em}I);
\begin{align*}
&H({\rm Comp} )H(\Delta ^{!}\otimes 1)(u\otimes x)\\
=&H(\xi {\rm Comp} \circ T)H(\Delta ^{!}\otimes 1)(u\otimes x)\\
=&H(T) H(\xi {\rm Comp} )(j^{*}(\rho \otimes \rho ) \Delta ^{!}(u)\cdot x)\\
=&H(T)\Bigl(   (\xi {\rm Comp} )^{*}j^{*}(\rho \otimes \rho ) \Delta ^{!}(u) \cdot (\xi {\rm Comp} )^{*}(x) \Bigr)\\
=& H(T)\Bigl(  (ev_{1,0} , ev _{1,\frac{1}{2}})^{*}(\rho \otimes \rho ) \Delta ^{!}(u)  \cdot (\xi {\rm Comp} )^{*}(x) \Bigr) \\
=& H(T)H(\Delta ^{!}\otimes 1)(u\otimes (\xi {\rm Comp} )^{*}(x) )\\
=& H(T)H(\Delta ^{!}\otimes 1)( 1\otimes (\xi {\rm Comp})^{*} )(u\otimes x).
\end{align*}
for $u\otimes x$ in $H^{*}({\mathbb B}\otimes _{A^{\otimes 2}}A_{{\rm PL}}(LM))$.
We conclude that
\begin{align*}
 &Dlp\circ Dlcop\\
=& H(\Delta ^{!}\otimes 1)\Phi '(1\otimes {\rm EM}''_{2})^{-1}{\rm EM}''_{1}H(T_{(123)})H({\rm inc} )H({\rm Comp} \times 1)\\
=&(-1)^{d} H((1\otimes \Delta ^{!})\otimes 1)H(\Delta ^{!} \otimes (1\otimes 1)\otimes 1) (1\otimes ({\rm EM}'_{5})^{-1}){\rm EM}_{5}^{-1}H({\rm inc})H({\rm Comp} \times 1)\\
=& (-1)^{d}(1\otimes Dlcop )(Dlp \otimes 1).
\end{align*}
A similar argument shows that the second equation $Dlp \circ Dlcop = (-1)^{d}(Dlcop \otimes 1)(1\otimes Dlp)$. Consider the following three cubes:
\begin{equation}\label{fro5}
\xymatrixrowsep{0.5cm}
\xymatrixcolsep{0.9cm}
\xymatrix{
& LM \times _{M} LM \times _{M} LM \ar[rr]^{1\times {\rm Comp} \circ T } \ar[dd]^(0.3){(1\times {\rm Comp} \circ T) T_{(321)}} \ar[ld]_{ev_{0}}  & & LM \times _{M} LM \ar[dd]^{\xi ' {\rm Comp}}  \ar[ld]^{(ev_{2,\frac{1}{2}},ev_{2,0})} \\
M \ar[dd]^{=} \ar[rr]^(0.3){\Delta} & & M^{\times 2} \ar[dd]^(0.3){=} & \\
& LM \times _{M} LM \ar[ld]^{ev_{0}}  \ar[rr]^{{\rm Comp} } & & LM \ar[ld]^{j} \\
M \ar[rr]^{\Delta }& & M^{\times 2} &
}
\end{equation}

\begin{equation}\label{fro6}
\xymatrixrowsep{0.5cm}
\xymatrixcolsep{0.9cm}
\xymatrix{
& LM \times _{M} LM \times _{M} LM \ar[rr]^{{\rm inc}} \ar[dd]^(0.3){{\rm inc}} \ar[ld]_{ev_{0}}  & &  (LM \times _{M} LM ) \times LM  \ar[dd]^{{\rm inc}}  \ar[ld]^{ev_{0}\times ev_{0}} \\
M \ar[dd]^{\Delta } \ar[rr]^(0.3){\Delta} & & M^{\times 2} \ar[dd]^(0.3){\Delta \times 1} & \\
& LM \times ( LM \times _{M} LM) \ar[ld]^{ev_{0}\times ev_{0}}  \ar[rr]^(0.3){{\rm inc} } & & LM^{\times 3}\ar[ld]^{ev_{0}^{\times 3}} \\
M \times M \ar[rr]^{1\times \Delta }& & M^{\times 3} &
}
\end{equation}

\begin{equation}\label{fro7}
\xymatrixrowsep{0.5cm}
\xymatrixcolsep{0.3cm}
\xymatrix{
& LM \times _{M} LM \times _{M} LM \ar[rr]^-{{\rm inc}} \ar[ld]_{ev_{0}} \ar[dd]^(0.3){T_{(123)}} & &  (LM\times _{M} LM)\times LM  \ar[rr]^-{{\rm Comp} \times 1} \ar[ld]^{ev_{0}\times ev_{0}} & & LM^{\times 2}  \ar[ld]^{j\times 1} \ar[dd]^{R_{\frac{1}{2}}\times 1} \\
M \ar[rr]^(0.3){\Delta } \ar[dd]^{=} & & M ^{\times 2} \ar[rr]^{\Delta \times 1} & & M^{\times 3} \ar[dd]^{=} &\\
& LM \times _{M} LM \times _{M} LM \ar[rr]^{1\times {\rm Comp} \circ T}  \ar[ld]^{ev_{0}} & & LM \times _{M} LM \ar[rr]^{{\rm inc} \circ  T} \ar[ld]_(0.7){(ev_{2,\frac{1}{2}},ev_{2,0})}  & & LM^{\times 2} \ar[ld]^{(ev_{\frac{1}{2}} , ev_{0} ) \times ev_{0}} \\
M \ar[rr]^{\Delta } & & M ^{\times 2} \ar[rr]^{1\times \Delta } & & M^{\times 3} &
}
\end{equation}
The map $\xi ':LM\to LM$ is given by
\begin{eqnarray*}
\xi' (\gamma )(t) =
\left\{
\begin{array}{ll}
\gamma (\frac{1}{2}t + \frac{3}{4}) & (0\leq t\leq \frac{1}{2}) \\
\gamma (2t-1) & (\frac{1}{2}\leq t\leq \frac{3}{4})\\
\gamma (t-\frac{1}{4}) & (\frac{3}{4}\leq t\leq 1).
\end{array}
\right.
\end{eqnarray*}
We denote by $R_{\frac{1}{2}}$ the rotation of loops by $\frac{1}{2}$, that is,
\begin{eqnarray*}
R_{\frac{1}{2}} (\gamma )(t) =
\left\{
\begin{array}{ll}
\gamma (t+\frac{1}{2}) & (0\leq t\leq \frac{1}{2}) \\
\gamma (t-\frac{1}{2}) & (\frac{1}{2}\leq t\leq 1).
\end{array}
\right.
\end{eqnarray*}

Denote by
\begin{align*}
& {\rm EM ''}_{3}: {\mathbb B} \otimes _{A^{\otimes 2}}A_{{\rm PL}}(LM \times _{M}LM) \longrightarrow A_{{\rm PL}}(LM \times _{M}LM \times _{M} LM),\\
& {\rm EM ''}_{4}: (A\otimes {\mathbb B}) \otimes _{A^{\otimes 3}}A_{{\rm PL}}(LM \times LM) \longrightarrow A_{{\rm PL}}(LM \times _{M}LM )
\end{align*}
the Eilenberg-Moore maps of the top square of the diagram \eqref{fro5} and the bottom and right-hand side square of \eqref{fro7}, respectively. Consider the following diagram:
\begin{landscape}
\begin{equation}\label{fro8}
\xymatrixrowsep{0.8cm}
\xymatrixcolsep{0.5cm}
\xymatrix{
H^{*}(LM \times LM) \ar[r]_{H(1\times {\rm Comp} )} \ar[dd]^{H({\rm inc})}
\ar@/^6mm/[rrr]^{(1\otimes Dlp)} \ar@{}[rdd]|{{\rm (I')}} & H^{*}(LM\times (LM ^{\times _{M}2})) \ar[dd]^{H({\rm inc})} \ar@{}[rdd]|{{\rm (I\hspace{-.1em}I')}} \ar[r]_{({\rm EM}_{1})^{-1}}^{\cong }& H^{*}((A\otimes {\mathbb B})\otimes _{A^{\otimes 3}}A_{{\rm PL}}(LM^{\times 3}))  \ar[dd]^{\Delta ^{*}\otimes _{(\Delta \times 1)^{*}}{\rm inc}^{*}} \ar[r]_{H( (1\otimes \Delta ^{!})\otimes 1)} \ar@{}[rdd]|{{\rm (I\hspace{-.1em}I\hspace{-.1em}I')}}& H^{*}(LM^{\times 3}) \ar[dd]_{H({\rm inc} \times 1)}
\\
&&&\\
H^{*}(LM \times _{M} LM) \ar[r]^{H(1\times _{M} {\rm Comp} \circ T)}  \ar@{}[rdddd]|{{\rm (I\hspace{-.1em}V')}} \ar[dddd]^{({\rm EM'})^{-1}}_{\cong }& H^{*}(LM ^{\times _{M}3}) \ar@{}[rdddddd]|(0.35){{\rm (V')}} \ar[r]^-{({\rm EM}_{5})^{-1}}_-{\cong } & H^{*}({\mathbb B}\otimes _{A^{\otimes 2}}A_{{\rm PL}}((LM ^{\times _{M}2})\times LM ))   \ar[r]^{H(\Delta ^{!} \otimes 1)} \ar@{}[rdd]|{{\rm (V\hspace{-.1em}I')}} \ar[dd]^{(1\otimes {\rm EM'}_{1})^{-1}}_{\cong }& H^{*}( (LM ^{\times _{M}2})\times LM) \ar[dd]_{({\rm EM}'_{1})^{-1}}^{\cong }\\
&&&\\
 & H^{*}(LM ^{\times _{M}3}) \ar[uu]^-{\cong }_-{H(T_{(123)})=H(T_{(321)})^{-1}} \ar[dd]^{({\rm EM}''_{3})^{-1}}_{\cong } & H^{*}({\mathbb B}\otimes _{A^{\otimes 2}}(({\mathbb B}\otimes A) \otimes _{A^{\otimes 3}}A_{{\rm PL}}(LM ^{\times 2})))  \ar[r]^-{H(\Delta ^{!} \otimes (1\otimes 1)\otimes 1)}  \ar@{}[rdddd]^{{\rm (I\hspace{-.1em}X')}}& H^{*}(( {\mathbb B}\otimes A)\otimes _{A^{\otimes 3}}A_{{\rm PL}}(LM ^{\times 2}))   \ar[dddd]_{H( (\Delta ^{!}\otimes 1)\otimes 1)} \\
 &&&\\
H^{*}({\mathbb B}\otimes _{A^{\otimes 2}}A_{{\rm PL}}(LM))  \ar[r]^{1\otimes (\xi '  {\rm Comp})^{*} } \ar[dd]^{H(\Delta ^{!} \otimes 1)}  \ar@{}[rdd]|{{\rm (V\hspace{-.1em}I\hspace{-.1em}I')}}  & H^{*}({\mathbb B} \otimes _{A^{\otimes 2}}A_{{\rm PL}}(LM ^{\times _{M}2}))  \ar[dd]^{H(T) H(\Delta ^{!} \otimes 1)}  \ar@{}[rdd]|{{\rm (V\hspace{-.1em}I\hspace{-.1em}I\hspace{-.1em}I')}} \ar[r]^-{(1\otimes {\rm EM}''_{4})^{-1}}_-{\cong } & H^{*}({\mathbb B} \otimes _{A^{\otimes 2}}((A\otimes {\mathbb B}) \otimes _{A^{\otimes 3}}A_{{\rm PL}}(LM^{\times 2})))   \ar[uu]^{\cong }_{\Psi   }  \ar[dd]^{\Psi  '}  & \\
&&&\\
H^{*}(LM) \ar[r]^{H({\rm Comp})} \ar@/_6mm/[rrr]^{Dlp} & H^{*}(LM ^{\times _{M}2}) \ar[r]^{({\rm EM})^{-1}}_{\cong }& H^{*}({\mathbb B} \otimes _{A^{\otimes 2}}A_{{\rm PL}}(LM^{\times 2}))  \ar[r]^{(-1)^{d}H(\Delta ^{!}\otimes 1)} & H^{*}(LM\times LM).
}
\end{equation}
\end{landscape}
We defined the map $\Psi $ and $\Psi '$ for
\begin{align*}
&\Psi (v\otimes (a\otimes u)\otimes x) =  (-1)^{|u|(|a|+|v|)+|a||v|}u\otimes (av\otimes 1)\otimes (R_{\frac{1}{2}}\times 1)^{*}(x),\\
& \Psi ' (v\otimes (a\otimes u)\otimes x) = \sum (-1)^{|v_{2}||a|+(|v_{1}|+|a|)(|v_{2}|+|u|)} v_{2}u \otimes \Bigl( ev_{1,\frac{1}{2}}^{*}\rho (v_{1}a)\cdot x \Bigr), 
\end{align*}
respectively. We also see that $\Psi $ is an isomorphism with the inverse map $\Psi ^{-1}$ given by
\[
\Psi ^{-1}(v \otimes (u\otimes a)\otimes x)=(-1)^{|v|(|a|+|u|)}u\otimes (1\otimes av)\otimes (R_{\frac{1}{2}}\times 1)^{*}(x).
\]
Similar for the argument of the proof the commutativity of the diagram \eqref{fro4}, we see that the diagram \eqref{fro8} commutes. We only show the commutativity for the diagrams (I') and (I\hspace{-.1em}X'). Since the composite ${\rm Comp} \circ T$ is homotopic to ${\rm Comp}$, the diagrams (I') commutes.
It is readily seen that the equality
\[
H(R_{\frac{1}{2}}\times 1)H((\Delta ^{!}\otimes 1)\otimes 1)H(\Delta ^{!} \otimes (1\otimes 1)\otimes 1)\Psi =(-1)^{d}H( \Delta ^{!}\otimes 1)\Psi '
\]
holds by a straightforward calculation.
Since the map $R_{\frac{1}{2}}$ is homotopic to the identity map, we therefore conclude that
\[
Dlp \circ Dlcop = (-1)^{d}(Dlcop \otimes 1)(1\otimes Dlp).
\]
\end{proof}


\section{A triviality or non-triviality of the loop product and the loop coproduct.}

We first introduce a semifree resolution of a minimal Sullivan model $(\Lambda V ,d)$ for $M$ as a $\Lambda V \otimes \Lambda V$-module. Consider the commutative graded algebra $\Lambda V \otimes \Lambda V \otimes \Lambda (sV)$ with the differential $D$ given by
\begin{align*}
&D(v\otimes 1\otimes 1)= d(v)\otimes 1\otimes 1, D(1\otimes v \otimes 1)= 1\otimes d(v) \otimes 1,\\
&D(1\otimes 1\otimes sv) =(-v\otimes 1 + 1\otimes v)\otimes 1 - \sum _{i=1}^{\infty }\frac{(sD)^{i}}{i!}(v\otimes 1\otimes 1),
\end{align*}
where $sV$ is the suspension of $V$, that is $(sV)^{n}=V^{n+1}$, and $s$ is the unique derivation of the algebra $\Lambda V \otimes \Lambda V \otimes \Lambda (sV)$ defined by
\[
s(v\otimes 1\otimes 1) = 1\otimes 1\otimes sv = s(1\otimes v\otimes 1), \ s(1\otimes 1\otimes sv)=0.
\]
Then, by \cite[\S 15, Example1]{Rat}, $(\Lambda V \otimes \Lambda V \otimes \Lambda (sV) ,D)$ is a Sullivan model for $M^{I}$ and
\[
\bar{\varepsilon }:= \mu \cdot \varepsilon  : \Lambda V \otimes \Lambda V \otimes \Lambda (sV) \to \Lambda V
\]
is a semifree resolution of $\Lambda V$ as a $\Lambda V \otimes \Lambda V$-module. Here, $\mu$ is the product of $\Lambda V$ and $\varepsilon :\Lambda V\to {\mathbb Q}$ is the canonical augmentation. Moreover, the commutative differential graded algebra
\[
(\Lambda V \otimes \Lambda (sV) , \bar{d})\cong (\Lambda V , d)\otimes _{\Lambda V\otimes \Lambda V}(\Lambda V \otimes \Lambda V \otimes \Lambda (sV) ,D)
\]
is a Sullivan model for the free loop space $LM$, where the differential $\bar{d}$ is defined as $\bar{d}(v)=d(v)$, $d(sv)=-sd(v)$. 
For simplicity, we put
\[
{\mathcal M}_{M^{I}}=(\Lambda V \otimes \Lambda V \otimes \Lambda (sV) ,D), \ \ {\mathcal M}_{LM}= (\Lambda V \otimes \Lambda (sV), \bar{d}).
\]
We next consider a model for the dual loop product and coproduct. The following is a rational coefficient version of the torsion functor description of \cite{KMN}. If $M$ is a Poincar\'{e} duality space, models for the products are introduced by \cite{CT2007}, \cite{FT2009} and \cite{FTV2007}.
By the following commutative diagram,
\[
\xymatrixrowsep{0.3cm}
\xymatrixcolsep{1cm}
\xymatrix{
       & LM \times _{M} LM \ar[rr]^{pr_{2}} \ar[ld]_{pr_{1}} \ar[dd]^(0.3){{\rm Comp}} &      & LM \ar[ld]^(0.3){ev_{0}} \ar[dd]^{{\rm inc}} \\
LM  \ar[rr]^(0.3){ev_{0}} \ar[dd]^{{\rm inc}}  &                          &  M  \ar[dd]^(0.3){=} &    \\
       & LM    \ar[rr]^{\zeta   _{1}}  \ar[ld]_{\zeta  _{2}}                &       &   M^{I} \ar[ld]^-{(ev_{0},ev_{1})} \\
M^{I} \ar[rr]^(0.7){(ev_{1},ev_{0})} &                         & M\times M & \\
       & LM   \ar[uu]^(0.7){\theta   } \ar[rr]^(0.3){{\rm inc}}   \ar[ld]_{ev_{0}}               &       &   M^{I} \ar[uu]^(0.3){=} \ar[ld]^-{(ev_{0},ev_{1})} \\
 M  \ar[uu]^{c} \ar[rr]^{\Delta }  &                        & M\times M \ar[uu]^(0.3){=}&
}
\]
the morphism
\[
\xymatrixcolsep{0.5cm}
\xymatrix{
{\mathcal M}_{LM}\cong \Lambda V \otimes _{\Lambda V ^{\otimes 2}} {\mathcal M}_{M^{I}} 
& {\mathcal M}_{M^{I}} \otimes _{\Lambda V ^{\otimes 2}}{\mathcal M}_{M^{I}}  \ar[l]_-{\bar{\varepsilon } \otimes 1}^-{\simeq } \ar[d]^-{(\mu\otimes 1)\otimes _{\mu}(\mu \otimes 1)}\\
 \Lambda V \otimes _{\Lambda V^{\otimes 2}}{\mathcal M}_{LM}^{\otimes 2} & {\mathcal M}_{LM}\otimes _{\Lambda V}{\mathcal M}_{LM} \ar[l]^{\cong }
}
\]
induces the map $H({\rm Comp} )=H((\mu\otimes 1)\otimes _{\mu}(\mu \otimes 1))H(\bar{\varepsilon } \otimes 1)^{-1}$ in homology. Here, $c:M\to LM$ is the map which sends $x$ to the constant path at $x$. The maps $\theta  $, $\zeta _{1}$ and $\zeta _{2}$ are given as follows:
\begin{eqnarray*}
\theta  (\gamma )(t) =
\left\{
\begin{array}{ll}
\gamma (2t) & (0\leq t\leq \frac{1}{2} ) \\
\gamma (1) & l( \frac{1}{2}\leq t\leq 1)
\end{array}
\right.
\end{eqnarray*}
and $\zeta _{1}(\gamma )(t)=\gamma (\frac{1}{2}t )$, $\zeta _{2}( \gamma )(t)=\gamma (\frac{1}{2}t+\frac{1}{2})$ for $\gamma \in LM$.
Hence, the dual loop product is induced by the following composite in homology
\begin{equation}\label{mdlp}
\xymatrix{
{\mathcal M}_{LM} \ar[r]^-{\cong} & \Lambda V \otimes _{\Lambda V ^{\otimes 2}} {\mathcal M}_{M^{I}} 
& {\mathcal M}_{M^{I}} \otimes _{\Lambda V ^{\otimes 2}}{\mathcal M}_{M^{I}}  \ar[l]_-{\bar{\varepsilon } \otimes 1}^-{\simeq } \ar[d]^{(\mu\otimes 1)\otimes _{\mu}(\mu \otimes 1)} \\
&& {\mathcal M}_{LM}\otimes _{\Lambda V}{\mathcal M}_{LM} \ar[d]^{\cong } \\
{\mathcal M}_{LM}^{\otimes 2}  & {\mathcal M}_{M^{I}}\otimes _{\Lambda V^{\otimes 2}}{\mathcal M}_{LM}^{\otimes 2}  \ar[r]^-{\bar{\varepsilon } \otimes 1}_-{\simeq }   \ar[l]_-{\Delta^{!}\otimes 1\otimes 1}&  \Lambda V \otimes _{\Lambda V^{\otimes 2}}{\mathcal M}_{LM}^{\otimes 2}.
}
\end{equation}
Consider the two commutative diagrams,
\begin{equation}\label{model1}
\xymatrixrowsep{0.3cm}
\xymatrixcolsep{1cm}
\xymatrix{
                       &  M \times M  \ar[dd]^{=} &    \\
       & LM    \ar[rr]^{\zeta  _{1}}  \ar[ld]_{\zeta _{2}} \ar[dd]^(0.3){j}                &       &   M^{I} \ar[ld]^{(ev_{0},ev_{1})} \ar[dd]^{(ev_{0},ev_{1})} \\
M^{I} \ar[rr]^(0.7){(ev_{1},ev_{0})} \ar[dd]^{(ev_{1},ev_{0})} &                         & M\times M \ar[dd]^(0.3){=} & \\
       & M\times M   \ar[ld]^{=} \ar[rr]^(0.3){=}                  &       &   M\times M \ar[ld]^{=} \\
 M\times M   \ar[rr]^{=} &                        & M\times M &
}
\end{equation}
\begin{equation}\label{model2}
\xymatrixrowsep{0.3cm}
\xymatrixcolsep{1cm}
\xymatrix{
       & LM \times _{M} LM \ar[rr]^{{\rm Comp} } \ar[ld]_{ev_{0}} \ar[dd]^(0.3){{\rm inc} } &      & LM \ar[ld]_{j} \ar[dd]^{(\zeta _{1},\zeta _{2})} \\ 
M  \ar[rr]^(0.3){\Delta } \ar[dd]^{\Delta }  &                          &  M \times M  \ar[dd]^(0.3){ \Delta '  } &    \\
       & LM \times LM   \ar[rr]^(0.3){{\rm inc} }  \ar[ld]_{(ev_{0},ev_{0})}                 &       &   M^{I}\times M^{I} \ar[ld]^-{(ev_{0},ev_{1})^{\times 2}}  \\
M\times M \ar[rr]^{\Delta \times \Delta }  &                         & M^{\times 2} \times M^{\times 2}, & 
}
\end{equation}
where $\Delta '$ is a map which sends $(x,y)$ to $(x,y,y,x)$. By the diagram \eqref{model1}, we see that the inclusion $\Lambda V\otimes \Lambda V \hookrightarrow  {\mathcal M}_{M^{I}}\otimes _{\Lambda V^{\otimes 2}}{\mathcal M}_{M^{I}}$
is a model for $j$. Since the quotient map
\[
\xymatrix{
\overline{\zeta  }:{\mathcal M}_{M^{I}}\otimes {\mathcal M}_{M^{I}} \ar[r] & {\mathcal M}_{M^{I}}\otimes _{\Lambda V^{\otimes 2}} {\mathcal M}_{M^{I}} 
}
\]
is a model for $(\zeta _{1},\zeta _{2})$, by the diagram \eqref{model2}, the following composite is a model for ${\rm inc} : LM\times _{M}LM \to LM \times LM$;
\[
\xymatrix{
{\mathcal M}_{LM}^{\otimes 2}\cong \Lambda V^{\otimes 2}\otimes _{\Lambda V^{\otimes 4}}{\mathcal M}_{M^{I}}^{\otimes 2} \ar[r]^-{\mu \otimes _{\mu '} \overline{\zeta  }} & \Lambda V\otimes _{\Lambda V^{\otimes 2}}({\mathcal M}_{M^{I}}\otimes _{\Lambda V^{\otimes 2}} {\mathcal M}_{M^{I}}) ,
}
\]
where $\mu'$ is a model for $\Delta '$, that is, $\mu'(v_{1}\otimes  v_{2}\otimes v_{3}\otimes v_{4})=(-1)^{|v_{4}|(|v_{2}|+|v_{3}|)}v_{1}v_{4}\otimes v_{2}v_{3}$. We thus see that the dual loop coproduct is induced by the composite in homology;
\begin{equation}\label{mdlcp}
\xymatrix{
{\mathcal M}_{LM}^{\otimes 2}\cong \Lambda V^{\otimes 2}\otimes _{\Lambda V^{\otimes 4}}{\mathcal M}_{M^{I}}^{\otimes 2} \ar[r]^-{\mu \otimes _{\mu '} \overline{\zeta  }}
& \Lambda V\otimes _{\Lambda V^{\otimes 2}}({\mathcal M}_{M^{I}}\otimes _{\Lambda V^{\otimes 2}} {\mathcal M}_{M^{I}}) \\
& {\mathcal M}_{M^{I}}\otimes _{\Lambda V^{\otimes 2}}({\mathcal M}_{M^{I}}\otimes _{\Lambda V^{\otimes 2}} {\mathcal M}_{M^{I}}) \ar[u]_-{\bar{\varepsilon } \otimes 1}^-{\simeq } \ar[d]^-{\Delta ^{!}\otimes 1} \\
{\mathcal M}_{LM} & {\mathcal M}_{M^{I}}\otimes _{\Lambda V^{\otimes 2}} {\mathcal M}_{M^{I}} \ar[l]_-{\bar{\varepsilon } \otimes 1}^-{\simeq }
}
\end{equation}

The following is the result of F\'{e}lix, Halperin and Thomas related to rational Gorenstein spaces.

\begin{thm}\cite[Proposition 3.4, Proposition 5.1]{FHT1988} \label{thm6.1}
Let $X$ be simply-connected space and assume that the rational homotopy group $\pi_{*}(X)\otimes {\mathbb Q}$ is finite dimension. Then, $X$ is a ${\mathbb Q}$-Gorenstein space with formal dimension
\[
\sum _{|x_{i}|: {\rm odd}} |x_{i}| - \sum _{|x_{i}|:{\rm even}} (|x_{i}|-1),
\]
where $x_{i}$ is a basis of $\pi_{*}(X)\otimes {\mathbb Q}$.
\end{thm}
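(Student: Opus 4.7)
The plan is to work with the minimal Sullivan model $(\Lambda V, d)$ of $X$. By hypothesis $V$ is finite-dimensional, and a homogeneous basis of $V$ has the same degree distribution as a basis $\{x_i\}$ of $\pi_*(X) \otimes \mathbb{Q}$. Since $A_{\mathrm{PL}}(X) \simeq C^*(X;\mathbb{Q})$ as differential graded algebras and Moore's differential Ext is invariant under quasi-isomorphism of the coefficient algebra, it suffices to establish
\[
\mathrm{Ext}^*_{\Lambda V}(\mathbb{Q}, \Lambda V) \cong \mathbb{Q}
\]
concentrated in the single total degree
\[
d = \sum_{|x_i|\,\mathrm{odd}} |x_i| - \sum_{|x_i|\,\mathrm{even}} (|x_i| - 1).
\]

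First I would build a $\Lambda V$-semifree resolution of $\mathbb{Q}$ via the one-sided acyclic closure $(\Lambda V \otimes \Lambda(sV), D)$, constructed as at the start of Section 6 of the paper but adapted to the left-module setting, so that $D|_{\Lambda V}=d$ and $D(sv)=v-\sum_{i\geq 1}(sD)^i(v)/i!$. This identifies $\mathrm{Ext}^*_{\Lambda V}(\mathbb{Q},\Lambda V)$ with the cohomology of the Hom complex $(\Lambda V \otimes \Lambda(sV)^{\vee}, \delta)$, where $\delta$ is dual to $D$ along the $\Lambda(sV)$-factor. Next I would impose the decreasing filtration by word length in $V \oplus sV$. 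With respect to this filtration, the ``Koszul piece'' defined by $D_{0}(sv)=v$ and $D_{0}(v)=0$ preserves word length, whereas the decomposable differential $d$ on $\Lambda V$ and every correction term $(sD)^{i}(v)/i!$ strictly raise it. Therefore the associated graded differential is exactly $D_0$, and the $E_1$-page computes $\mathrm{Ext}^*_{(\Lambda V,\,0)}(\mathbb{Q}, \Lambda V)$.

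Under the decomposition $\Lambda V = \Lambda V^{\mathrm{odd}} \otimes \Lambda V^{\mathrm{even}}$ (exterior times polynomial) and the K\"unneth formula for Ext over a tensor product of $\mathbb{Q}$-algebras, the $E_1$-page factors into a tensor product of two classical calculations. The exterior algebra $\Lambda V^{\mathrm{odd}}$ is a finite-dimensional Frobenius algebra, which gives $\mathrm{Ext}^*(\mathbb{Q}, \Lambda V^{\mathrm{odd}}) = \mathbb{Q}$ concentrated at the socle degree $\sum_{|x_i|\,\mathrm{odd}} |x_i|$ (obtained from the periodic resolution by multiplication by the top class). For the polynomial algebra $\Lambda V^{\mathrm{even}}$ the Koszul resolution yields $\mathrm{Ext}^*(\mathbb{Q}, \Lambda V^{\mathrm{even}}) = \mathbb{Q}$ in total degree $-\sum_{|x_i|\,\mathrm{even}}(|x_i|-1)$; for a single even generator $y$ this reduces to computing the cohomology of $\mathbb{Q}[y] \xrightarrow{\cdot y} \mathbb{Q}[y]$, which is $\mathbb{Q}$ in total degree $1-|y|$. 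Hence $E_1 = \mathbb{Q}$ concentrated in total degree $d$.

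Since $E_1$ is one-dimensional in a single bidegree, every higher differential has either zero source or zero target, so $E_r$ stabilizes at $\mathbb{Q}$ for all $r \geq 1$. Convergence is essentially automatic: in each fixed total degree the complex $\Lambda V \otimes \Lambda(sV)^{\vee}$ is finite-dimensional ($V$ is finite-dimensional and concentrated in degrees $\geq 2$), so the induced filtration on each degree is finite. The main obstacle I expect is the careful identification of the associated graded with the Koszul differential $D_0$: verifying that every correction term $(sD)^i(v)/i!$ genuinely sits in higher filtration, and that the signs and gradings conspire to make $\mathrm{gr}\,\delta$ coincide with the classical Koszul codifferential, is the delicate bookkeeping step on which the whole argument rests. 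Once this is done, the conclusion $\mathrm{Ext}^*_{\Lambda V}(\mathbb{Q}, \Lambda V) \cong \mathbb{Q}$ in total degree $d$ follows, proving that $X$ is ${\mathbb Q}$-Gorenstein of formal dimension $d$.
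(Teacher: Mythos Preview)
The paper does not prove this theorem at all; it is quoted verbatim as a known result from \cite{FHT1988} (their Propositions 3.4 and 5.1), so there is no in-paper argument to compare against. Your outline---pass to the minimal Sullivan model, use the acyclic closure $(\Lambda V\otimes\Lambda(sV),D)$ as a semifree resolution of $\mathbb{Q}$, filter by word length so that the associated graded sees only the Koszul part $D_0(sv)=v$, then compute $\mathrm{Ext}_{(\Lambda V,0)}(\mathbb{Q},\Lambda V)$ by splitting into exterior and polynomial factors---is exactly the standard strategy behind the cited result, and the identification of the formal dimension is correct.

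There is, however, a genuine gap in your convergence step. The claim that ``in each fixed total degree the complex $\Lambda V\otimes\Lambda(sV)^{\vee}$ is finite-dimensional'' is false as soon as $V$ has both even- and odd-degree generators. Take $V=\langle y,x\rangle$ with $|y|=2$, $|x|=3$: then $\mathbb{Q}[y]\subset\Lambda V$ lives in arbitrarily high positive degrees, while $\mathbb{Q}[sx]\subset\Lambda(sV)$ (note $|sx|=2$ is even) forces $(\Lambda sV)^{\vee}$ to live in arbitrarily low negative degrees, and the elements $y^{a}\otimes((sx)^{a})^{*}$ all sit in total degree $0$. In fact the Hom complex is $\mathrm{Hom}_{\mathbb{Q}}(\Lambda sV,\Lambda V)$, a \emph{product} over the graded pieces of $\Lambda sV$, so even your identification with the tensor product $\Lambda V\otimes(\Lambda sV)^{\vee}$ needs justification before you can invoke finiteness. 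Consequently the word-length filtration is not bounded in each degree, and the convergence argument as written does not go through. The conclusion is still correct, but you must argue convergence differently: either show directly that the filtration on the Hom complex is complete and exhaustive and invoke conditional convergence together with the $E_1$-degeneration, or run the induction on $\dim V$ that \cite{FHT1988} actually uses, adjoining one generator at a time and controlling $\mathrm{Ext}$ via the resulting short exact sequences.
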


Let $(\Lambda V,d)$ be a minimal Sullivan model for a simply-connected space $M$. Since $V$ is isomorphic to ${\rm Hom}_{{\mathbb Z}}(\pi_{*}(X),{\mathbb Q})$ (\cite[Lemma 13.11]{Rat}), if $V$ is finite dimensional, then $M$ is a Gorenstein space. We now put
\[
{\rm fdim} \, M = \sum _{|x_{i}|: \text{odd}} |x_{i}| - \sum _{|x_{i}|:\text{even}} (|x_{i}|-1).
\]

Before proving Proposition \ref{trivial1}, we give the following lemma.
\begin{lem}\label{odd}
For any odd degree elements $x_{1},x_{2} \cdots x_{k}$ in $V$,
\[
\Bigl( \prod _{i=1}^{k}(-x_{i}\otimes 1 +1\otimes x_{i}) \Bigr) (-x_{1}x_{2}\cdots x_{k}\otimes 1+1\otimes x_{1}x_{2}\cdots x_{k})=0
\] 
in $\Lambda V\otimes \Lambda V$.
\end{lem}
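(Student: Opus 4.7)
My plan is to work entirely inside the graded-commutative subalgebra of $\Lambda V\otimes \Lambda V$ generated by the odd elements $\alpha_i:=x_i\otimes 1$ and $\beta_i:=1\otimes x_i$ for $i=1,\dots,k$. This subalgebra is an exterior algebra on $2k$ odd generators, so $\alpha_i,\beta_j$ pairwise anticommute and each squares to zero. Setting $\gamma_i:=-\alpha_i+\beta_i$, the first factor of the product in question is exactly $\gamma_1\gamma_2\cdots\gamma_k$, and the second factor is $-\alpha_1\alpha_2\cdots\alpha_k+\beta_1\beta_2\cdots\beta_k$, so the lemma reduces to checking
\[
\gamma_1\gamma_2\cdots\gamma_k\cdot\bigl(-\alpha_1\alpha_2\cdots\alpha_k+\beta_1\beta_2\cdots\beta_k\bigr)=0
\]
inside this exterior subalgebra.

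The central observation I will exploit is that $\gamma_1,\dots,\gamma_k$ are also odd, pairwise anticommuting, and squaring to zero. Consequently, $\gamma_1\gamma_2\cdots\gamma_k$ annihilates any monomial that contains some $\gamma_j$ as a factor: one can move the offending $\gamma_j$ past the other odd factors by the Koszul sign rule until it sits next to the $\gamma_j$ already present in $\gamma_1\cdots\gamma_k$, whereupon $\gamma_j^2=0$ forces the product to vanish.

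To finish, I substitute $\alpha_i=\beta_i-\gamma_i$ into $\alpha_1\alpha_2\cdots\alpha_k$ and expand; every term is either the pure $\beta_1\beta_2\cdots\beta_k$ or carries at least one factor $\gamma_j$, and the observation above kills all of the latter after multiplication by $\gamma_1\cdots\gamma_k$. Hence
\[
\gamma_1\cdots\gamma_k\cdot\alpha_1\alpha_2\cdots\alpha_k=\gamma_1\cdots\gamma_k\cdot\beta_1\beta_2\cdots\beta_k,
\]
and the identity follows by subtraction. I do not anticipate a genuine obstacle: the only delicate point is the Koszul sign bookkeeping when re-expressing $\alpha_i$ and rearranging odd factors, but since the unwanted terms are annihilated irrespective of sign, the proof is essentially combinatorial.
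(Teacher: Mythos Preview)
Your proof is correct, and it differs from the paper's argument. The paper proceeds by induction on $k$: after checking the base case $(-x_1\otimes 1+1\otimes x_1)^2=0$, it rewrites the second factor so that multiplying by $(-x_k\otimes 1+1\otimes x_k)$ yields $(-1)^k(x_k\otimes x_k)$ times the $(k-1)$-version of the second factor, and then applies the induction hypothesis after commuting the even element $x_k\otimes x_k$ past the remaining product $\prod_{i=1}^{k-1}(-x_i\otimes 1+1\otimes x_i)$. Your approach instead treats the problem directly inside the exterior algebra on the odd elements $\alpha_i,\beta_i$: the substitution $\alpha_i=\beta_i-\gamma_i$ shows at once that $\alpha_1\cdots\alpha_k$ and $\beta_1\cdots\beta_k$ differ by an element of the two-sided ideal generated by $\gamma_1,\dots,\gamma_k$, which is annihilated by $\gamma_1\cdots\gamma_k$ since each $\gamma_j$ is odd and squares to zero. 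Your argument avoids the inductive bookkeeping and the somewhat delicate identity the paper uses at the inductive step; the paper's computation, on the other hand, is more explicit about the intermediate expression, which might be useful if one needed a refinement rather than just the vanishing.
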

\begin{proof}
The proof is induction on $k$. For $k=1$, it is easily seen that $(-x_{1}\otimes 1+1\otimes x_{1})^{2}=0$.
Assume that the equation
\[
\Bigl( \prod _{i=i}^{k-1} (-x_{i}\otimes 1+1\otimes x_{i}) \Bigr) (-x_{1}x_{2}\cdots x_{k-1} \otimes 1 + 1\otimes x_{1}x_{2}\cdots x_{k-1} )=0 
\]
hold. Then,
\begin{align*}
 &(-x_{k}\otimes 1 +1\otimes x_{k})(-x_{1}x_{2}\cdots x_{k}\otimes 1+1\otimes x_{1}x_{2}\cdots x_{k})\\
=&(-x_{k}\otimes 1 +1\otimes x_{k})\Bigl( (-x_{1}x_{2}\cdots x_{k-1}\otimes 1 + 1\otimes x_{1}x_{2}\cdots x_{k-1})(x_{k}\otimes 1+1\otimes x_{k}) \\
   & \hspace{10em} -(-1)^{k-1}x_{k}\otimes x_{1}x_{2}\cdots x_{k-1}+x_{1}x_{2}\cdots x_{k-1}\otimes x_{k} \Bigr)\\
=& (-1)^{k}(x_{k}\otimes x_{k})(-x_{1}x_{2}\cdots x_{k-1}\otimes 1+ 1\otimes x_{1}x_{2}\cdots x_{k-1}).
\end{align*}
Hence, induction hypothesis shows the assertion.
\end{proof}
\begin{proof}[Proof of Proposition 1.2]
(1) Let $x_{1},x_{2},\cdots , x_{n}$ be a homogeneous basis of $V=V^{\text{odd}}$. We see that a $\Lambda V^{\otimes 2}$-module map $\Delta ^{!}:{\mathcal M}_{M^{I}}\to \Lambda V^{\otimes 2}$ defied by
\[
\Delta ^{!}(1)=\prod _{i=1}^{n}(-x_{i}\otimes 1+1\otimes x_{i}), \ \Delta ^{!}(sx_{i_{1}}\cdots sx_{i_{k}})=0
\]
is a generator of ${\rm Ext}^{{\rm fdim}\, M}_{\Lambda V^{\otimes 2}}(\Lambda V,\Lambda V^{\otimes 2})$. We now check that the map $\Delta ^{!}$ is a cycle but it is not boundary. For any $x_{i}$, we may write $dx_{i}=\sum \lambda x_{i_{1}}\cdots x_{i_{k}}$ for some $\lambda \in {\mathbb Q}$. Then, by Lemma \ref{odd},
\begin{align*}
d\Delta ^{!}(1) = d\Bigl( \prod _{i=1}^{n}(-x_{i}\otimes 1+1\otimes x_{i}) \Bigr) =0
\end{align*}
Hence, $(d\Delta ^{!}-(-1)^{d}\Delta ^{!}D)(1)=0$.
The equality $(-x_{i}\otimes 1+1\otimes x_{i})^{2}=0$ enables us to obtain that $(d\Delta ^{!}-(-1)^{d}\Delta ^{!}D)(sx_{i})=0$, and similarly, we have $(d\Delta ^{!}-(-1)^{d}\Delta ^{!}D)(sx_{i_{1}}\cdots sx_{i_{k}})=0$. Hence $\Delta ^{!}$ is a cycle in ${\rm Hom}^{*}_{\Delta V^{\otimes 2}}({\mathcal M}_{M^{I}},\Lambda V^{\otimes 2})$. Any $\Lambda V^{\otimes 2}$-module map ${\mathcal M}_{M^{I}}\to \Lambda V^{\otimes 2}$ with degree ${\rm fdim} \, M -1$ send $1$ to $0$ by degree reason, the map $\Delta ^{!}$ is not a boundary. Since $\Delta ^{!}(1)$ is a non-trivial cycle, the dual loop product is non-trivial. The equality $\mu \Delta ^{!}=0$ implies that the dual loop coproduct is trivial.\\
\indent
(2) We first note that if $V$ is generated by even degree basis, the differential $d$ is zero by degree reason. Let $y_{1},y_{2}, \cdots ,y_{m}$ be a basis of $V=V^{\text{even}}$. Then, the $\Lambda V^{\otimes 2}$-module map $\Delta^{!}:{\mathcal M}_{M^{I}}\to \Lambda V^{\otimes 2}$ defined by $\Delta^{!}(1)=0$ and
\begin{eqnarray*}
\Delta^{!} (sy_{j_{1}}sy_{j_{2}}\cdots sy_{j_{l}}) =
\left\{
\begin{array}{cl}
1 & (\{ j_{1},j_{2},\cdots ,j_{k} \} = \{ 1,2,\cdots ,m\} )\\
0 & (\text{otherwise}).
\end{array}
\right.
\end{eqnarray*}
is a generator of ${\rm Ext}^{{\rm fdim}\, M}_{\Lambda V^{\otimes 2}}(\Lambda V,\Lambda V^{\otimes 2})$. Indeed, a straightforward calculation shows that the map $\Delta^{!}$ is compatible with the differentials. Also, for any $\Lambda V^{\otimes 2}$-module map $\psi :{\mathcal M}_{M^{I}}\to \Lambda V^{\otimes 2}$ of degree ${\rm fdim}\, M -1$, the following equation shows that $\Delta^{!}$ is not a boundary:
\begin{align*}
   &(-1)^{{\rm fdim}\, M -1}\psi D ((sy_{1}\cdots sy_{m}) ) \\
=&(-1)^{{\rm fdim}\, M -1} \sum _{i=1}^{m}\pm \psi ( (-y_{i}\otimes 1 +1\otimes y_{i} ) \otimes sy_{1}\cdots sy_{i-1}sy_{i+1}\cdots sy_{m}) \\
= &(-1)^{{\rm fdim}\, M -1}\sum _{i=1}^{m}\pm (-y_{i}\otimes 1 +1\otimes y_{i})\psi (sy_{1}\cdots sy_{i-1}sy_{i+1}\cdots sy_{m}) \neq 1,
\end{align*}
where $\pm$ is the Kuszul sign convention. By \eqref{mdlp}, we have that
\begin{align*}
&H(\bar{\varepsilon }\otimes 1)^{-1}H({\rm Comp} )(v_{1}v_{2}\cdots v_{l}\otimes sw_{1}sw_{2}\cdots sw_{k}) \\
=& \prod _{i=1}^{k}(v_{1}\cdots v_{l}\otimes 1 \otimes 1)\otimes \Bigl( (1\otimes 1)\otimes (1\otimes sw_{i}) - (1\otimes sw_{i})\otimes (1\otimes 1) \Bigr).
\end{align*}
for any $v_{1}v_{2}\cdots v_{l}\otimes sw_{1}\cdots sw_{k}$ in ${\mathcal M}_{LM}$. Therefore, $\Delta^{!}(1)=0$ implies that the dual loop product is trivial. Also the following equations show that the dual loop coproduct is non-trivial:
\begin{align*}
&Dlcop ( (1\otimes sy_{1}sy_{2}\cdots sy_{m}) \otimes (1\otimes 1))\\
=& H(\bar{\varepsilon }\otimes 1)H(\Delta ^{!}\otimes 1)\Bigl( \ \prod_{i=1}^{m} ( -(1\otimes 1\otimes sy_{i})\otimes (1\otimes 1\otimes 1)\otimes (1\otimes 1 \otimes 1  ) \\
 & \hspace{11em}  (1\otimes 1\otimes 1)\otimes (1\otimes 1\otimes sy_{i})\otimes (1\otimes 1 \otimes 1  )\Bigr)\\
 =& H(\bar{\varepsilon }\otimes 1)( (-1)^{m}(1\otimes 1\otimes 1)\otimes (1\otimes 1\otimes 1)  )= (-1)^{m}(1\otimes 1) \neq 0.
\end{align*}
This completes the proof.
\end{proof}

We next consider spaces in which a minimal Sullivan model of the spaces are pure and first recall the definition of pure Sullivan algebras.
\begin{defn}\cite[\S 32 (a)]{Rat} \label{pure}
{\rm
A minimal Sullivan model is {\it pure} if $V$ is finite dimensional, $d(V^{\text{odd}})\subset \Lambda V^{\text{even}}$ and $d(V^{\text{even}})=0$.
}
\end{defn}

Let $(\Lambda V,d)$ be a pure minimal Sullivan model with $V^{\text{odd}}\neq \{0 \}$, $V^{\text{even}}\neq \{ 0 \}$ and $x_{1},\cdots ,x_{n}$ is a basis of $V^{\text{odd}}$ and $y_{1},\cdots ,y_{m}$ is a basis of $V^{\text{even}}$. Then, we may write
\[
D(sx_{r}) = (-x_{r}\otimes 1 + 1\otimes x_{r})\otimes 1 -\sum _{i=1}^{m}f_{i}^{r}\otimes sy_{i}
\]
in ${\mathcal M}_{M^{I}}$ for some $f_{i}^{r}\in \Lambda V^{\otimes 2}$. For a subset $J=\{ j_{1}<j_{2}<\cdots <j_{k}\}$ of $\{ 1,2,\cdots ,m\}$, we put
\[
sy_{J}=sy_{j_{1}}sy_{j_{2}}\cdots sy_{j_{k}}
\]
for simplicity. Especially, if $J$ is the empty set $\phi $, put $sy_{\phi }=1$. We now define a $\Lambda V^{\otimes 2}$-module map $\Delta^{!}:{\mathcal M}_{M^{I}}\to \Lambda V^{\otimes 2}$ as follows:
for any generator of $\Lambda (sV)$, if a generator is of the form $sy_{J^{c}}$ for some $J\subset \{ 1,2,\cdots ,m\}$,
\[
\Delta^{!}(sy_{J^{c}})=\sum _{i_{1}=1}^{n}\sum _{{\begin{subarray}{c}i_{2}=1,\\ i_{2}\neq i_{1} \end{subarray}}}^{n}\cdots \sum_{\begin{subarray}{c}i_{k}=1,\\ i_{k}\neq i_{1},\cdots ,i_{k-1} \end{subarray}}^{n} \hspace{-1em}(-1)^{\varepsilon _{(J,i_{1},\cdots ,i_{k})}}f_{j_{1}}^{i_{1}}\cdots f_{j_{k}}^{i_{k}} \Bigl( \prod_{\begin{subarray}{c}i=1,\\ i\neq i_{1},\cdots ,i_{k} \end{subarray}}^{n}(-x_{i}\otimes 1+1\otimes x_{i}) \Bigr)
\]
and $\Delta^{!} $ sends the others to $0$. Here, $J^{c}$ is the complementary subset of $J$, 
\[
\varepsilon _{(J,i_{1},\cdots ,i_{k})}=\sum_{r=1}^{k}(i_{r}+j_{r}+r-1)+k{\rm fdim}\, M + s(\sigma ),
\]
$\sigma $ is a $k$-permutation which satisfies $i_{\sigma (k)}<i_{\sigma (k-1)}<\cdots <i_{\sigma (1)}$. If $\sigma $ is a even permutation, we put $s(\sigma )=0$ and if $\sigma $ is a odd permutation, put $s(\sigma )=1$.
\begin{lem}\label{pureshriek}
The map $\Delta^{!}$ is a generator of ${\rm Ext}_{\Lambda V^{\otimes 2}}^{{\rm fdim}\, M}(\Lambda V,\Lambda V^{\otimes 2})$.
\end{lem}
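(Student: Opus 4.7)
Since $V$ is finite dimensional, Theorem~\ref{thm6.1} makes $M$ a ${\mathbb Q}$-Gorenstein space of formal dimension ${\rm fdim}\, M$, so Theorem~\ref{key} gives ${\rm Ext}_{\Lambda V^{\otimes 2}}^{{\rm fdim}\, M}(\Lambda V,\Lambda V^{\otimes 2})\cong {\mathbb Q}$. It therefore suffices to show that the $\Lambda V^{\otimes 2}$-module map $\Delta^!$ is (i) well-defined of total degree ${\rm fdim}\, M$, (ii) a cocycle in ${\rm Hom}^*_{\Lambda V^{\otimes 2}}({\mathcal M}_{M^{I}},\Lambda V^{\otimes 2})$, and (iii) not a coboundary.

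Step (i) is a degree count: from $D(sx_r)=(-x_r\otimes 1+1\otimes x_r)-\sum_i f_i^r\otimes sy_i$ one reads $|f_i^r|=|x_r|-|y_i|+1$, and each summand of $\Delta^!(sy_{J^{c}})$ with $|J|=k$ has total degree $\sum_{r=1}^{k}(|x_{i_r}|-|y_{j_r}|+1)+\sum_{i\notin\{i_r\}}|x_i|-|sy_{J^{c}}|$, which simplifies to $\sum_i|x_i|-\sum_j(|y_j|-1)={\rm fdim}\, M$. For step~(ii), $\Lambda V^{\otimes 2}$-linearity reduces the verification $d\Delta^!=(-1)^{{\rm fdim}\, M}\Delta^!D$ to generators in $\Lambda(sV)$. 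Any monomial containing a factor $sx_r$ is sent to zero by $\Delta^!$, and the compensating term produced by $D(sx_r)$ is absorbed by the defining formula. On a pure $sy$-monomial $sy_{J^{c}}$, the two relations
\begin{align*}
df_i^r&=0,\\
-dx_r\otimes 1+1\otimes dx_r&=\sum_{i}(-1)^{|f_i^r|}f_i^r(-y_i\otimes 1+1\otimes y_i),
\end{align*}
obtained by separating the $sy$-linear and $sy$-free parts of $D^{2}(sx_r)=0$, are precisely what is needed to match $d_{\Lambda V^{\otimes 2}}$ applied to the factors $-x_i\otimes 1+1\otimes x_i$ against the ``one $f$-factor longer'' terms coming from $\Delta^!(D\, sy_{J^{c}})=\sum_{j\in J^{c}}\pm(-y_j\otimes 1+1\otimes y_j)\Delta^!(sy_{(J\cup\{j\})^{c}})$; the sign $\varepsilon_{(J,i_1,\ldots,i_k)}$ is tuned exactly so that this telescoping is sign-correct.

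For step~(iii), setting $J=\emptyset$ in the formula gives $\Delta^!(sy_1\cdots sy_m)=\pm\prod_{i=1}^{n}(-x_i\otimes 1+1\otimes x_i)$. For any $\psi\in{\rm Hom}^{{\rm fdim}\, M-1}_{\Lambda V^{\otimes 2}}({\mathcal M}_{M^{I}},\Lambda V^{\otimes 2})$, the coboundary $\delta\psi=d\psi-(-1)^{{\rm fdim}\, M-1}\psi D$ evaluated on $sy_1\cdots sy_m$ lies in the ideal $K+I\subset\Lambda V^{\otimes 2}$, where $I=(y_j\otimes 1-1\otimes y_j)_j$ and $K=(dx_i\otimes 1,\,1\otimes dx_i)_i$: the $\psi D$ contribution lies in $I$ because $D(sy_j)=-y_j\otimes 1+1\otimes y_j$, while $d_{\Lambda V^{\otimes 2}}$ sends every element into $K$ since $dy_j=0$ by purity. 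In the quotient $\Lambda V^{\otimes 2}/(K+I)\cong\bigl(\Lambda V^{\text{even}}/(dx_1,\ldots,dx_n)\bigr)\otimes\Lambda V^{\text{odd}}\otimes\Lambda V^{\text{odd}}$, expanding $\prod_{i=1}^{n}(-x_i\otimes 1+1\otimes x_i)$ produces a summand $\pm\,1\otimes x_1x_2\cdots x_n$ which is non-zero since the $x_i$ are distinct odd generators. Hence $\Delta^!(sy_1\cdots sy_m)\notin K+I$, so $\Delta^!$ is not a coboundary and represents a generator of the one-dimensional space ${\rm Ext}_{\Lambda V^{\otimes 2}}^{{\rm fdim}\, M}(\Lambda V,\Lambda V^{\otimes 2})$.

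The main obstacle is step~(ii): keeping track of the Koszul signs $\varepsilon_{(J,i_1,\ldots,i_k)}$ and verifying that the contributions with $i''\in\{i_1,\ldots,i_k\}$ cancel pairwise while those with $i''\notin\{i_1,\ldots,i_k\}$ assemble exactly into the $j\in J^{c}$ summands of $\Delta^!(D\, sy_{J^{c}})$; the asymmetric sign convention in the definition of $\Delta^!$ is designed precisely to make this work.
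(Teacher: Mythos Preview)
Your plan is correct and follows essentially the same route as the paper: check that $\Delta^!$ is a cocycle on the generators $sy_{J^c}$ and $sx_q\,sy_{J^c}$ using the identities coming from $D^2(sx_r)=0$, and show it is not a coboundary by evaluating at $sy_1\cdots sy_m$. Your step~(iii) is in fact cleaner than the paper's somewhat cryptic remark: isolating the ideal $K+I$ and passing to the quotient $\bigl(\Lambda V^{\text{even}}/(dx_i)\bigr)\otimes \Lambda V^{\text{odd}}\otimes\Lambda V^{\text{odd}}$ makes the non-vanishing transparent, whereas the paper simply asserts that the coboundary cannot match the top odd term. One small point to be careful about in step~(ii): your phrase ``absorbed by the defining formula'' for monomials containing a single $sx_q$ hides a genuine identity, namely $(-x_q\otimes 1+1\otimes x_q)\,\Delta^!(sy_{J^c})=\sum_{j\in J}\pm f_j^q\,\Delta^!(sy_{(J\setminus\{j\})^c})$, which the paper verifies explicitly and which does require the precise form of $\varepsilon_{(J,i_1,\ldots,i_k)}$; this is a second place (besides the pure $sy$-case) where the sign bookkeeping must be carried out.
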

\begin{proof}
It is only enough to check that the map $\Delta^{!}$ is a cycle and not a boundary in ${\rm Hom}_{\Lambda V^{\otimes 2}}^{*}({\mathcal M}_{M^{I}},\Lambda V^{\otimes 2})$.
Let $\{i_{1},i_{2},\cdots ,i_{k}\}$ be a subset of $\{1,2,\cdots ,n\}$ and $\sigma $ is a $k$-permutation such that $i_{\sigma (k)}<i_{\sigma (k-1)}<\cdots <i_{\sigma (1)}$. Then,
\begin{align*}
  &d\Bigl( \prod_{\begin{subarray}{c}i=1,\\ i\neq i_{1},\cdots ,i_{k} \end{subarray}}^{n}(-x_{i}\otimes 1+1\otimes x_{i}) \Bigr) \\
=& \sum _{r=0}^{k}\sum _{p=i_{\sigma (k-r+1)+1}}^{i_{\sigma (k-r)-1}}(-1)^{p-1-r}(-dx_{p}\otimes 1+1\otimes dx_{p})\prod_{\begin{subarray}{c}i=1,\\ i\neq i_{1},\cdots ,i_{k},p \end{subarray}}^{n}(-x_{i}\otimes 1+1\otimes x_{i}),
\end{align*}
where we put $\sigma (k+1)=0$ and $\sigma (0)=n+1$ for convenient. Since $-dx_{p}\otimes 1+1\otimes dx_{p}=\sum _{j=1}^{m}f^{p}_{j}(-y_{j}\otimes 1+1\otimes y_{j})$,
\begin{align*}
d\Delta^{!}(sy_{J^{c}})=&\sum _{j=1}^{m} \sum _{r=0}^{k}\sum _{p=i_{\sigma (k-r+1)+1}}^{i_{\sigma (k-r)-1}}\sum _{i_{1}=1}^{n}\sum _{{\begin{subarray}{c}i_{2}=1,\\ i_{2}\neq i_{1} \end{subarray}}}^{n}\cdots \sum_{\begin{subarray}{c}i_{k}=1,\\ i_{k}\neq i_{1},\cdots ,i_{k-1} \end{subarray}}^{n}(-1)^{\varepsilon _{(J,i_{1},\cdots ,i_{k})}+p-1-r}\\
&\times (-y_{j}\otimes 1+1\otimes y_{j})f_{j}^{p}f_{j_{1}}^{i_{1}}\cdots f_{j_{k}}^{i_{k}}\prod_{\begin{subarray}{c}i=1,\\ i\neq i_{1},\cdots ,i_{k},p \end{subarray}}^{n}(-x_{i}\otimes 1+1\otimes x_{i}).
\end{align*}
A straightforward calculation shows that
\begin{align*}
&\sum _{j\in J} \sum _{r=0}^{k}\sum _{p=i_{\sigma (k-r+1)+1}}^{i_{\sigma (k-r)-1}}\sum _{i_{1}=1}^{n}\sum _{{\begin{subarray}{c}i_{2}=1,\\ i_{2}\neq i_{1} \end{subarray}}}^{n}\cdots \sum_{\begin{subarray}{c}i_{k}=1,\\ i_{k}\neq i_{1},\cdots ,i_{k-1} \end{subarray}}^{n}(-1)^{\varepsilon _{(J,i_{1},\cdots ,i_{k})}+p-1-r}\\
&\times (-y_{j}\otimes 1+1\otimes y_{j})f_{j}^{p}f_{j_{1}}^{i_{1}}\cdots f_{j_{k}}^{i_{k}}\prod_{\begin{subarray}{c}i=1,\\ i\neq i_{1},\cdots ,i_{k},p \end{subarray}}^{n}(-x_{i}\otimes 1+1\otimes x_{i})=0
\end{align*}
and
\begin{align*}
&\sum _{j\in J^{c}} \sum _{r=0}^{k}\sum _{p=i_{\sigma (k-r+1)+1}}^{i_{\sigma (k-r)-1}}\sum _{i_{1}=1}^{n}\sum _{{\begin{subarray}{c}i_{2}=1,\\ i_{2}\neq i_{1} \end{subarray}}}^{n}\cdots \sum_{\begin{subarray}{c}i_{k}=1,\\ i_{k}\neq i_{1},\cdots ,i_{k-1} \end{subarray}}^{n}(-1)^{\varepsilon _{(J,i_{1},\cdots ,i_{k})}+p-1-r}\\
&\times (-y_{j}\otimes 1+1\otimes y_{j})f_{j}^{p}f_{j_{1}}^{i_{1}}\cdots f_{j_{k}}^{i_{k}}\prod_{\begin{subarray}{c}i=1,\\ i\neq i_{1},\cdots ,i_{k},p \end{subarray}}^{n}(-x_{i}\otimes 1+1\otimes x_{i})=(-1)^{{\rm fdim}\, M}\Delta ^{!}D(sy_{J^{c}}).
\end{align*}
Hence, the equation $(d\Delta^{!}-(-1)^{{\rm fdim}\, M}\Delta^{!} D)(sy_{J^{c}})=0$ holds. Similarly, for a base of $\Lambda (sV)$ of the form $sx_{q}sy_{J^{c}}$, we see that $(d\Delta^{!}-(-1)^{{\rm fdim}\, M}\Delta^{!} D)(sx_{q}sy_{J^{c}})=0$. Indeed,
\begin{align*}
 &(d\Delta^{!}-(-1)^{{\rm fdim}\, M}\Delta^{!} D)(sx_{q}sy_{J^{c}})\\
=&(-1)^{{\rm fdim}\, M +1}\Delta ^{!}D(sx_{q}sy_{J^{c}})\\
=& -(-x_{q}\otimes 1+1\otimes x_{q})\Delta ^{!}(sy_{J^{c}}) + (-1)^{{\rm fdim}\, M}\sum _{r=1}^{k}(-1)^{j_{r}-r}f^{q}_{j_{r}}\Delta^{!}(sy_{(J - \{ j_{r} \} )^{c}}).
\end{align*}
Since
\begin{align}\label{term1}
&\sum _{r=1}^{k}(-1)^{{\rm fdim}\, M+j_{r}-r}f^{q}_{j_{r}}\Delta^{!}(sy_{(J - \{ j_{r} \} )^{c}})\\
=& \sum _{r=1}^{k} \sum _{i_{1}=1}^{n}\cdots \sum _{{\begin{subarray}{c}i_{r-1}=1,\\ i_{r-1}\neq i_{1},\cdots ,i_{r-2} \end{subarray}}}^{n}\sum _{{\begin{subarray}{c}i_{r+1}=1,\\ i_{r+1}\neq i_{1},\cdots ,i_{r-1} \end{subarray}}}^{n}\cdots \sum_{\begin{subarray}{c}i_{k}=1,\\ i_{k}\neq i_{1}, \cdots ,i_{k-1} \end{subarray}}^{n} \notag \\
&(-1)^{{\rm fdim}\, M+j_{r}-r+ \varepsilon _{(J-\{ j_{r}\} ,i_{1},\cdots ,i_{k})}}f^{q}_{j_{r}}f_{j_{1}}^{i_{1}}\cdots f_{j_{k}}^{i_{k}} \Bigl( \prod_{\begin{subarray}{c}i=1,\\ i\neq i_{1},\cdots ,i_{k} \end{subarray}}^{n}(-x_{i}\otimes 1+1\otimes x_{i}) \Bigr) , \notag 
\end{align}
we can decompose the right-hand side of \eqref{term1} to the following two terms;
\begin{align}\label{term2}
& \sum _{r=1}^{k} \sum _{{\begin{subarray}{c}i_{1}=1,\\ i_{1}\neq q \end{subarray}}}^{n}\cdots \sum _{{\begin{subarray}{c}i_{r-1}=1,\\ i_{r-1}\neq q, i_{1},\cdots ,i_{r-2} \end{subarray}}}^{n}\sum _{{\begin{subarray}{c}i_{r+1}=1,\\ i_{r+1}\neq q, i_{1},\cdots ,i_{r-1} \end{subarray}}}^{n}\cdots \sum_{\begin{subarray}{c}i_{k}=1,\\ i_{k}\neq q, i_{1}, \cdots ,i_{k-1} \end{subarray}}^{n}  \\
&(-1)^{{\rm fdim}\, M+j_{r}-r+ \varepsilon _{(J-\{ j_{r}\} ,i_{1},\cdots ,i_{k})}}f^{q}_{j_{r}}f_{j_{1}}^{i_{1}}\cdots f_{j_{k}}^{i_{k}} \Bigl( \prod_{\begin{subarray}{c}i=1,\\ i\neq i_{1},\cdots ,i_{k} \end{subarray}}^{n}(-x_{i}\otimes 1+1\otimes x_{i}) \Bigr)  \notag 
\end{align}
and the sum of the other terms. A straightforward calculation shows that the term \eqref{term2} is equal to $(-x_{q}\otimes 1+1\otimes x_{q})\Delta ^{!}(sy_{J^{c}})$ and the others is zero. Hence, $(d\Delta^{!}-(-1)^{{\rm fdim}\, M}\Delta^{!}D)(sx_{q}sy_{J^{c}})=0$. It is readily seen that $(d\Delta^{!}-(-1)^{{\rm fdim}\, M}\Delta^{!}D)(sx_{1}^{m_{1}}\cdots sx_{n}^{m_{n}}sy_{J^{c}})=0$ for some $m_{i}\geq 0$, it turns out that $\Delta^{!}$ is a cycle in ${\rm Hom}_{\Lambda V^{\otimes 2}}^{*}({\mathcal M}_{M^{I}},\Lambda V^{\otimes 2})$. By definition of $\Delta^{!}$,
\[
\Delta^{!}(sy_{1}\cdots sy_{m})=\prod_{i=1}^{n}(-x_{i}\otimes 1+1\otimes x_{i}).
\]
However, for any $\Lambda V^{\otimes 2}$-module map $\psi $ with degree ${\rm fdim}\, M-1$, we see that $(d\psi -(-1)^{{\rm fdim}\, M -1}\psi D)(sy_{1}\cdots sy_{m})$ is not in $\Lambda ^{\geq 1}V^{\text{odd}}$. It implies that $\Delta^{!}$ is not a boundary, and hence we have the assertion.
\end{proof}

\begin{proof}[Proof of Proposition \ref{trivial2}]
(1) If the differential $d$ is zero, $f^{r}_{i}=0$ for any $r$ and $i$. It turns out that, by the formula of Lemma \ref{pureshriek}, $\Delta^{!}(1)=0$ and $\mu \Delta^{!}=0$. Similarly argument of the proof of Proposition \ref{trivial1} (2), the equality $\Delta^{!}(1)=0$ implies that the dual loop product is trivial. We also see that the dual loop coproduct is trivial by the equation $\mu \Delta^{!}=0$.\\
\indent
(2) By Lemma \ref{pureshriek}, if ${\rm dim}\, V^{\text{odd}} > {\rm dim}\, V^{\text{even}}$, then $\mu \Delta^{!}=0$. Therefore, we see that the dual loop coproduct is trivial.
\end{proof}

\begin{ex}\label{compu}
{\rm
Let $M=ES^{1}\times _{S^{1}}{\mathbb C}P^{2}$ be the Borel construction associated to the action
\[
S^{1}\times {\mathbb C}P^{2}\longrightarrow {\mathbb C}P^{2}, \ t\cdot (x, y , z) = (tx,y,z).
\]
We see that the space $M$ is a ${\mathbb Q}$-Gorenstein space of formal dimension $3$ (\cite[Theorem 4.3]{FHT1988}). By \cite[Example 7.41]{Rat2}, a commutative differential graded algebra $(\Lambda V ,d)=(\Lambda (x_{2},u_{2},w_{5}),d)$ with $|x_{2}|=|u_{2}|=2$, $|w_{5}|=5$ and $dx_{2}=du_{2}=0$, $dw_{5}=u_{2}^{3}+x_{2}u_{2}^{2}$ is a minimal pure Sullivan model for $ES^{1}\times _{S^{1}}{\mathbb C}P^{2}$. A straightforward computation shows that
\begin{align*}
&D(sw_{5})=(-w_{5}\otimes 1 + 1\otimes w_{5})\otimes 1 -f\otimes su_{2} - g\otimes sx_{2},\\
& f = u_{2}^{2}\otimes 1 + u_{2}\otimes u_{2} + 1\otimes u_{2}^{2} + \frac{1}{3}u_{2}\otimes x_{2} + \frac{1}{3}x_{2}\otimes u_{2} + \frac{2}{3}u_{2}x_{2}\otimes 1 + \frac{2}{3}\otimes u_{2}x_{2},\\
& g= \frac{1}{3}u_{2}^{2}\otimes 1 + \frac{1}{3}u_{2}\otimes u_{2} + \frac{1}{3}\otimes u_{2}^{2}
\end{align*}
in ${\mathcal M}_{M^{I}}$. Hence, by Lemma \ref{pureshriek}, the $\Lambda V^{\otimes 2}$-module map $\Delta^{!}$ satisfies that
\begin{align*}
\Delta^{!}(sx_{2}su_{2}) = -w_{5}\otimes 1 + 1\otimes w_{5}, \ \ \Delta^{!}(sx_{2}) = f, \ \ \Delta^{!}(su_{2})=-g, \ \ \Delta^{!}(1)=0.
\end{align*}
It is easily seen that the dual loop coproduct is non-trivial. Indeed, $1\otimes su_{2}$ is a non-zero element in $H^{*}({\mathcal M}_{LM})=H^{*}(LM;{\mathbb Q})$, and by \eqref{mdlcp},
\begin{align*}
&Dlcop ((1\otimes su_{2})\otimes (1\otimes 1))\\
=& H(\bar{\varepsilon }\otimes 1)H(\Delta^{!}\otimes 1)H(\bar{\varepsilon }\otimes 1)^{-1}H(\mu \otimes _{\mu '} \bar{\zeta  })((1\otimes su_{2})\otimes (1\otimes 1))\\
=&H(\bar{\varepsilon }\otimes 1)H(\Delta^{!}\otimes 1)H(\bar{\varepsilon }\otimes 1)^{-1}( 1\otimes (1\otimes 1\otimes su_{2}) \otimes (1\otimes 1\otimes 1) )\\
=& H(\bar{\varepsilon }\otimes 1)H(\Delta^{!}\otimes 1)( - (1\otimes 1\otimes su_{2})\otimes (1\otimes 1\otimes 1) \otimes (1\otimes 1\otimes 1) \\
 & \hspace{10em}  + (1\otimes 1\otimes 1) \otimes (1\otimes 1\otimes su_{2}) \otimes (1\otimes 1\otimes 1))\\
=& H(\bar{\varepsilon }\otimes 1)( (g \otimes 1)\otimes (1\otimes 1\otimes 1))\\
=& u_{2}^{2}\otimes 1 \neq 0.
\end{align*}
By \eqref{mdlp}, $Dlp = H(\Delta ^{!}\otimes 1 \otimes 1)H(\bar{\varepsilon }\otimes 1)^{-1}H((\mu \otimes 1)\otimes _{\mu}(\mu \otimes 1))H(\bar{\varepsilon }\otimes 1)^{-1}$. For the non-zero element $1\otimes sx_{2}su_{2}sw_{5}$ in $H^{*}({\mathcal M}_{LM})=H^{*}(LM;{\mathbb Q})$,
\begin{align*}
&Dlp(1\otimes sx_{2}su_{2}sw_{5})\\
=& H(\Delta ^{!}\otimes 1 \otimes 1)H(\bar{\varepsilon }\otimes 1)^{-1}H((\mu \otimes 1)\otimes _{\mu}(\mu \otimes 1))\\
    & \Bigl( 1_{{\mathcal M}_{M^{I}}}\otimes (1\otimes 1 \otimes sx_{2}su_{2}sw_{5}) - (1\otimes 1\otimes sx_{2})\otimes (1\otimes 1\otimes su_{2}sw_{5})\\
    & + (1\otimes 1\otimes su_{2})\otimes (1\otimes 1\otimes sx_{2}sw_{5}) - (1\otimes 1\otimes sw_{5})\otimes (1\otimes 1\otimes sx_{2}su_{2})\\
    & + (1\otimes 1\otimes sx_{2}su_{2})\otimes (1\otimes 1\otimes sw_{5}) - (1\otimes 1\otimes su_{2}sw_{5})\otimes (1\otimes 1\otimes sx_{2})\\
    & + (1\otimes 1\otimes sx_{2}sw_{5})\otimes (1\otimes 1\otimes su_{2}) - (1\otimes 1\otimes sx_{2}su_{2}sw_{5})\otimes 1_{{\mathcal M}_{M^{I}}}
\Bigr)
\end{align*}
\begin{align*}
=& H(\Delta ^{!}\otimes 1 \otimes 1)  \\
  &\Bigl( \ 1_{{\mathcal M}_{M^{I}}}\otimes (1\otimes 1) \otimes (1\otimes sx_{2}su_{2}sw_{5})- 1_{{\mathcal M}_{M^{I}}}\otimes (1\otimes sx_{2})\otimes (1\otimes su_{2}sw_{5})\\
 &  + 1_{{\mathcal M}_{M^{I}}}\otimes (1\otimes su_{2})\otimes (1\otimes sx_{2}sw_{5})- 1_{{\mathcal M}_{M^{I}}}\otimes (1\otimes sw_{5})\otimes (1\otimes sx_{2}su_{2})\\
 & + 1_{{\mathcal M}_{M^{I}}}\otimes (1\otimes sx_{2}su_{2})\otimes (1\otimes sv_{5}) - 1_{{\mathcal M}_{M^{I}}}\otimes (1\otimes su_{2}sw_{5})\otimes (1\otimes sx_{2})\\
 & + 1_{{\mathcal M}_{M^{I}}}\otimes (1\otimes sx_{2}sw_{5})\otimes (1\otimes su_{2}) - 1_{{\mathcal M}_{M^{I}}}\otimes (1\otimes sx_{2}su_{2}sw_{5})\otimes (1\otimes 1)\\
 &  + 3((u_{2}\otimes 1 + 1\otimes u_{2})\otimes su_{2})\otimes (1\otimes su_{2})\otimes (1\otimes sx_{2}su_{2}) \\
 &  + ((u_{2}\otimes 1 + 1\otimes u_{2})\otimes sx_{2})\otimes (1\otimes su_{2})\otimes (1\otimes sx_{2}su_{2})\\
 & + ((x_{2}\otimes 1+1\otimes x_{2})\otimes su_{2})\otimes (1\otimes su_{2})\otimes (1\otimes sx_{2}su_{2})\\
 & + ((u_{2}\otimes 1+1\otimes u_{2})\otimes su_{2})\otimes (1\otimes sx_{2})\otimes (1\otimes sx_{2}su_{2})\\
 & + 3((u_{2}\otimes 1 + 1\otimes u_{2})\otimes su_{2})\otimes (1\otimes sx_{2}su_{2})\otimes (1\otimes su_{2}) \\
 &  + ((u_{2}\otimes 1 + 1\otimes u_{2})\otimes sx_{2})\otimes (1\otimes sx_{2}su_{2})\otimes (1\otimes su_{2})\\
 & + ((x_{2}\otimes 1+1\otimes x_{2})\otimes su_{2})\otimes (1\otimes sx_{2}su_{2})\otimes (1\otimes su_{2})\\
 & + ((u_{2}\otimes 1+1\otimes u_{2})\otimes su_{2})\otimes (1\otimes sx_{2}su_{2})\otimes (1\otimes sx_{2})
\Bigr)
\end{align*}
\begin{align*}
=& \frac{2}{3}(x_{2}u_{2}\otimes su_{2})\otimes (u_{2}\otimes sx_{2}su_{2}) + \frac{2}{3}(u_{2}\otimes su_{2})\otimes (x_{2}u_{2}\otimes sx_{2}su_{2})\\
&+ \frac{1}{3}(x_{2}u_{2}^{2}\otimes su_{2})\otimes (1\otimes sx_{2}su_{2}) + \frac{1}{3}(1\otimes su_{2})\otimes (x_{2}u_{2}^{2} \otimes sx_{2}su_{2})\\
&- \frac{1}{3}(u_{2}^{3}\otimes sx_{2})\otimes (1\otimes sx_{2}su_{2}) - \frac{2}{3}(u_{2}^{2}\otimes sx_{2})\otimes (u_{2}\otimes sx_{2}su_{2})\\
& - \frac{2}{3}(u_{2}\otimes sx_{2})\otimes (u_{2}^{2}\otimes sx_{2}su_{2}) - \frac{1}{3}(1\otimes sx_{2})\otimes (u_{2}^{3}\otimes sx_{2}su_{2})\\
& + \frac{2}{3}(x_{2}u_{2}\otimes sx_{2}su_{2})\otimes (u_{2}\otimes su_{2}) + \frac{2}{3}(u_{2}\otimes sx_{2}su_{2})\otimes (x_{2}u_{2}\otimes su_{2})\\
&+ \frac{1}{3}(x_{2}u_{2}^{2}\otimes sx_{2}su_{2})\otimes (1\otimes su_{2}) + \frac{1}{3}(1\otimes sx_{2}su_{2})\otimes (x_{2}u_{2}^{2} \otimes su_{2})\\
&- \frac{1}{3}(u_{2}^{3}\otimes sx_{2}su_{2})\otimes (1\otimes sx_{2}) - \frac{2}{3}(u_{2}^{2}\otimes sx_{2}su_{2})\otimes (u_{2}\otimes sx_{2})\\
& - \frac{2}{3}(u_{2}\otimes sx_{2}su_{2})\otimes (u_{2}^{2}\otimes sx_{2}) - \frac{1}{3}(1\otimes sx_{2}su_{2})\otimes (u_{2}^{3}\otimes sx_{2}) \neq 0.
\end{align*}
Here, $1_{{\mathcal M}_{M^{I}}}:=1\otimes 1\otimes 1$ in ${\mathcal M}_{M^{I}}$. Therefore, the dual loop product is non-trivial.\\
\indent
The same calculation described above shows that
\[
Dlcop (Dlcop \otimes 1) ((1\otimes su_{2})\otimes (1\otimes 1) \otimes (1\otimes su_{2})) = u_{2}^{4}\otimes 1 \neq 0.
\]
Therefore, it is an example which $(Lcop\otimes 1)Lcop$ is non-trivial.
}
\end{ex}

\section*{Acknowledgments}
The author would like to thank his adviser, Katsuhiko Kuribayashi, for encouragements and helpful comments.

\end{document}